\documentclass[11pt]{amsart}
\usepackage{amsmath}

\DeclareFontEncoding{OT2}{}{} 




\newcommand{\bA}{{\mathbb A}}
\newcommand{\bN}{{\mathbb N}}
\newcommand{\N}{{\mathbb N}}

\newcommand{\bC}{{\mathbb C}}
\newcommand{\F}{{\mathbb F}}

\newcommand{\bG}{{\mathbb G}}

\newcommand{\lra}{\longrightarrow}
\newcommand{\Q}{{\mathbb Q}}

\newcommand{\bR}{{\mathbb R}}
\newcommand{\Z}{{\mathbb Z}}
\newcommand{\bZ}{{\mathbb Z}}
\newcommand{\Qbar}{{\overline{\Q}}}

\newcommand{\Kbar}{{\overline{K}}}



\DeclareMathOperator{\h}{h}

\DeclareMathOperator{\Gal}{Gal}




\newtheorem{theorem}{Theorem}[section]
\newtheorem{lemma}[theorem]{Lemma}
\newtheorem{corollary}[theorem]{Corollary}

\theoremstyle{definition}
\newtheorem{definition}[theorem]{Definition}

\newtheorem{conjecture}[theorem]{Conjecture}

\newtheorem{Claim}[theorem]{Claim}

\theoremstyle{remark}
\newtheorem{remark}[theorem]{Remark}

\title{A Bogomolov type statement for function fields}
\author{Dragos Ghioca}
\address{Dragos Ghioca, Department of Mathematics, University of British Columbia, Vancouver, B.C. V6T 1Z2, Canada}

\email{dghioca@math.ubc.ca}
\begin{document}

\begin{abstract}
Let $k$ be a an algebraically closed field of arbitrary characteristic, and we let $h:\bA^n(\overline{k(t)})\lra \bR_{\ge 0}$ be the usual Weil height for the $n$-dimensional affine space corresponding to the function field $k(t)$ (extended to its algebraic closure). We prove that for any affine variety $V\subset \bA^n$ defined over $\overline{k(t)}$, there exists a positive real number $\epsilon:=\epsilon(V)$ such that if $P\in V(\overline{k(t)})$ and $h(P)<\epsilon$, then $P\in V(k)$.
\end{abstract}

\thanks{2010 AMS Subject Classification: Primary 11G50; Secondary 11G25, 11G10.
 The research of the author was partially supported by an NSERC grant.
}

\maketitle

\section{Introduction}

In a paper \cite{Lang-int} from 1965, Lang asks the following question: what are the plane irreducible curves $C$ which contain infinitely many points $(x,y)$ where both $x$ and $y$ are roots of unity? It is easy to see that if $C$ is the zero set of an equation of the form $x^my^n = \zeta$, where $m,n\in\Z$ and $\zeta$ is a root of unity, then indeed $C$ contains infinitely many points with both coordinates roots of unity.  An old theorem of Ihara-Serre-Tate-Lang says that indeed $C$ must have the above form.  Essentially they prove that if $C$ contains infinitely many points where both coordinates are roots of unity, then $C$ must be a (multiplicative) translate of a $1$-dimensional torus by a point with both coordinates roots of unity.  This result can be extended to higher dimensional varieties, and even to subvarieties of abelian varieties (the latter was formerly known as the Manin-Mumford Conjecture, proven by Raynaud \cite{Raynaud}). The following theorem is due to Laurent \cite{Laurent}; we write it in the language of algebraic groups, more precisely for $\bG_m^N$, which is the $N$-th cartesian power of the multiplicative group.

\begin{theorem}(Laurent)
 Let $V\subset \bG_m^N$ defined over $\bC$ be an irreducible affine variety which contains a Zariski dense set of torsion points (i.e., points with coordinates roots of unity). Then $V$ is a multiplicative translate of a torus by a torsion point. 
\end{theorem}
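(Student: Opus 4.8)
The plan is to read off the structure of $V$ from the Galois action on its torsion points, after two reductions. First, since $V=\overline{V_{\tors}}$ (Zariski closure over $\bC$) and the torsion points of $\bG_m^N$ all lie in $\bG_m^N(\Qbar)$, the $\Qbar$-Zariski closure $W$ of $V_{\tors}$ in $\bG_m^N$ satisfies $W_\bC=V$; hence $W$ is geometrically irreducible, still contains a Zariski dense set of torsion points, and is defined over a number field. It therefore suffices to prove the theorem for $W$, so I rename $W$ to $V$ and assume from now on that $V$ is defined over a number field $F$. Second, fix $\xi_0\in V_{\tors}$ and replace $V$ by $\xi_0^{-1}V$ (still irreducible, torsion dense, defined over the number field $F(\xi_0)$, and now containing the identity); letting $H\simeq\bG_m^M$ be the smallest subtorus such that $V$ lies in a coset of $H$, we get $V\subseteq H$, and by minimality $V$ lies in no coset of a proper subtorus of $H$. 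It now suffices to prove the claim: \emph{an irreducible, torsion dense subvariety of $\bG_m^M$ that is defined over a number field and is contained in no coset of a proper subtorus must equal $\bG_m^M$}; unwinding the reductions, the original $V$ is then the torsion coset $\xi_0H$.

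To prove the claim I would invoke Bilu's equidistribution theorem for points of small height on $\bG_m^M$. First, because $V$ (hence $V_{\tors}$) is contained in no coset of a proper subtorus, one can construct greedily a \emph{strict} sequence $\xi_1,\xi_2,\dots\in V_{\tors}$ with $\ord(\xi_i)\to\infty$, meaning that every fixed coset of a proper subtorus contains only finitely many of the $\xi_i$; this uses only that there are countably many torsion cosets of subtori, that each of them meets $V$ in a proper closed subset, and that $\bG_m^M$ has only finitely many torsion points of bounded order. Each $\xi_i$ has height $0$, so by Bilu's theorem the Galois orbit measures $\frac{1}{|\Gal(\Qbar/F)\cdot\xi_i|}\sum_{\eta\in\Gal(\Qbar/F)\cdot\xi_i}\delta_\eta$ converge weak-$\ast$ to the normalized Haar measure on the compact subtorus $(S^1)^M\subset(\bC^\ast)^M$. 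But $V$ is $\Gal(\Qbar/F)$-stable, so each of these measures is supported on the analytically closed set $V(\bC)$; hence the Haar measure on $(S^1)^M$ is supported on $V(\bC)$, i.e.\ $(S^1)^M\subseteq V(\bC)$. Since $(S^1)^M$ is Zariski dense in $\bG_m^M$ and $V$ is Zariski closed, this forces $V=\bG_m^M$, proving the claim.

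The main obstacle is the equidistribution input: Bilu's theorem (limit distribution of small points on tori) is the one substantial external ingredient, so a self-contained treatment would either reprove it or follow the original route of Ihara--Serre--Tate and Laurent. In that alternative one argues by induction on $\dim V$, replacing the equidistribution step by the observation that for all but finitely many primes $p$ and every $\xi\in V_{\tors}$ of order prime to $p$ one has $\xi^p\in V$ — a Frobenius element above $p$ sends $\xi$ to $\xi^p$ modulo that prime, the reduction map is injective on prime-to-$p$ torsion, and $V$ has good reduction — and then analyzing the dimension of $V\cap[p]^{-1}V$ as $p$ varies, after a preliminary reduction to the case $\dim\operatorname{Stab}(V)=0$. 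The delicate point on that route is controlling the degrees and the irreducible components of $V\cap[p]^{-1}V$ uniformly in $p$; the equidistribution argument trades this bookkeeping for Bilu's single theorem.
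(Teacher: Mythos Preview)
The paper does not prove this theorem at all; it is stated in the introduction as background, attributed to Laurent with a citation, and used only to motivate the paper's own function-field Bogomolov statement. So there is no ``paper's proof'' to compare against.

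That said, your outline is correct. The descent from $\bC$ to a number field is sound (the $\bC$-Zariski closure of a set of $\Qbar$-points is the base change of its $\Qbar$-Zariski closure, since $\Qbar\hookrightarrow\bC$ is faithfully flat), and once $V$ is defined over a number field your use of Bilu's equidistribution to force $(S^1)^M\subseteq V(\bC)$ is exactly how one deduces Manin--Mumford (indeed Bogomolov) for tori from Bilu. One small point of care: the greedy construction of a strict sequence must enumerate all proper \emph{torsion cosets} of subtori (equivalently, all proper algebraic subgroups), not merely cosets of proper subtori; this is what Bilu's hypothesis requires, and it goes through because those are countable and each meets the irreducible $V$ properly.

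It is worth noting that the paper itself surveys both of the routes you mention: it cites Bilu for the equidistribution proof and Bombieri--Zannier for the Frobenius $p$-th-power argument, and it is precisely the latter philosophy --- replace the map $x\mapsto x^p$ by the automorphism $t\mapsto t^M$ --- that the paper adopts for its own main results over function fields.
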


The roots of unity are the points of $\Qbar^*$  which have (naive) Weil height equal to $0$.  The Weil height of $x\in\Qbar$ is defined as follows (the set $\Omega_\Q$ stands for all inequivalent absolute values on $\Q$)
$$\h(x):=\frac{1}{[\Q(x):\Q]}\cdot \sum_{\sigma\in\Gal(\Qbar/\Q)} \sum_{v\in \Omega_{\Q}} \log^+|\sigma(x)|_v ,$$
where $\log^+(z):=\log\max\{1,z\}$ for any real number $z$, and for each absolute value of $\Q$, we fix an extension of it to $\Qbar$.    Also, always a sum involving $\sigma(x)$ over all $\sigma\in\Gal(\Qbar/\Q)$ is simply a sum over all the Galois conjugates of $x$. 
Similarly, for any $x_1,\dots,x_N$ contained in a number field $L$  we define
$$\h((x_1,\dots,x_N)):=\frac{1}{[L:\Q]}\cdot \sum_{\sigma\in\Gal(\Qbar/\Q)} \sum_{v\in \Omega_{\Q}} \log\max\{1,|\sigma(x_1)|_v,\dots, |\sigma(x_N)|_v\} .$$
So, Laurent's result yields that if $V\subset \bG_m^N$ contains a Zariski dense subset of points of height equal to $0$, then $V$ is a torsion translate of an algebraic subgroup of $\bG_m^N$. 

The same conclusion holds if one weakens the hypothesis and only asks that $V$ contains a Zariski dense set of points of \emph{small} height; this was initially known as the Bogomolov Conjecture. So, for each $\epsilon \ge 0$, let 
$$S_{\epsilon}:=\left\{ P\in \left(\Qbar^*\right)^N \text{ : } h(P)\le \epsilon \right\}.$$
\begin{conjecture} (Bogomolov)
\label{Bogomolov conjecture for torus}
Let $V\subset \bG_m^N$ be an irreducible subvariety (defined over $\Qbar$) such that for each $\epsilon > 0$, we have that $V(\Qbar)\cap S_\epsilon$ is Zariski dense in $V$. Then $V$ is a torsion translate of an algebraic subgroup of $\bG_m^N$.
\end{conjecture}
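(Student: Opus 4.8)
This is the Bogomolov conjecture for $\bG_m^N$, a theorem of Zhang, and the plan is to deduce it from the equidistribution theorem for Galois orbits of small points on tori (due to Bilu; in the abelian-variety setting to Szpiro--Ullmo--Zhang), which I would treat as the principal external input. \emph{First I would reduce to the case where $V$ is contained in no proper torsion coset.} The torsion cosets $gH$ ($H\subset\bG_m^N$ a subtorus, $g$ a torsion point) containing the irreducible $V$ form a nonempty family; since the intersection of two torsion cosets is a finite union of torsion cosets and $V$ is irreducible, minimizing $\dim H$ produces a minimal torsion coset $g_0H_0\supseteq V$. Replace $V$ by $V':=g_0^{-1}V\subseteq H_0$: translation by the torsion point $g_0$ preserves the canonical height on $\bG_m^N$ exactly --- indeed $h(\zeta x)=h(x)$ for $\zeta$ a root of unity, applying the triangle inequality both to $\zeta x$ and to $\zeta^{-1}(\zeta x)$ --- so $V'(\Qbar)\cap S_\epsilon$ is Zariski dense in $V'$ for every $\epsilon>0$, and by minimality $V'$ lies in no proper torsion coset of $H_0\cong\bG_m^{N'}$. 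It then suffices to prove $V'=H_0$, for then $V=g_0H_0$ is a torsion coset.

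\emph{Next I would run the equidistribution argument.} Renaming, assume $V\subseteq\bG_m^N$ is irreducible, defined over a number field $L\subset\Qbar$, lies in no proper torsion coset, and has $V(\Qbar)\cap S_\epsilon$ Zariski dense for every $\epsilon>0$ (equivalently, the essential minimum of $V$ vanishes). Since $\Qbar$ is countable, $V$ has only countably many proper closed subvarieties $Z_1,Z_2,\dots$, and each finite union $W_n:=Z_1\cup\cdots\cup Z_n$ is proper because $V$ is irreducible; the vanishing essential minimum thus lets me pick $x_n\in(V\setminus W_n)(\Qbar)$ with $h(x_n)<1/n$, a \emph{strict} sequence (i.e.\ $h(x_n)\to0$ and every proper closed subvariety contains only finitely many $x_n$). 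Fix an embedding $L\hookrightarrow\bC$. Each $x_n$ and all its $\Gal(\Qbar/L)$-conjugates lie in $V(\bC)$ with the same height, and by Bilu's theorem the uniform probability measures supported on these Galois orbits converge weakly to the normalized Haar measure $\mu_\infty$ on the unit polycircle $(S^1)^N\subset(\bC^*)^N$. As $V(\bC)$ is closed in $(\bC^*)^N$ and contains the support of every measure in the sequence, it contains the support of the limit $\mu_\infty$, namely all of $(S^1)^N$. But a Laurent polynomial vanishing identically on $(S^1)^N$ has all its coefficients zero (Fourier uniqueness for trigonometric polynomials), so $(S^1)^N$ is Zariski dense in $\bG_m^N$; hence $V=\bG_m^N$, which completes the reduction and the proof.

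\emph{The genuine obstacle is the equidistribution theorem itself}: that $h(x_n)\to0$ forces the archimedean Galois orbits to spread out to Haar measure instead of clustering. In Bilu's proof this rests on the product formula together with a delicate analytic estimate (through Mahler measures and resultants) preventing the archimedean ``energy'' of a point of small height from concentrating; in the Szpiro--Ullmo--Zhang framework it is extracted from an arithmetic Hilbert--Samuel / successive-minima inequality for the canonical height of subvarieties. That is where the arithmetic depth lies; the other ingredients --- the minimal-torsion-coset reduction, the countability argument yielding a strict sequence, and the density of $(S^1)^N$ --- are formal or elementary. One further point deserving care is the bookkeeping when $V$ is defined over a number field $L\neq\Q$: one should work throughout with $\Gal(\Qbar/L)$-orbits relative to a fixed archimedean place of $L$, but this changes nothing essential.
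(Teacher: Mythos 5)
The paper states this result as Conjecture~\ref{Bogomolov conjecture for torus} purely as background and does not prove it; it is the classical Bogomolov theorem for $\bG_m^N$, attributed in the surrounding text to Zhang, with alternative proofs by Bilu and by Bombieri--Zannier. Your sketch is correct and reproduces Bilu's equidistribution strategy, which the paper summarizes in the introduction: reduce to $V$ contained in no proper torsion coset, build a strict sequence of small points by enumerating the countably many proper $\Qbar$-closed subvarieties of the irreducible $V$, invoke Bilu's limit theorem to get $(S^1)^N\subseteq V(\bC)$, and finish by Zariski density of the polycircle. (You also correctly note the small but necessary point that your strictness relative to $V$, combined with $V$ lying in no proper torsion coset, gives the strictness relative to algebraic subgroups of $\bG_m^N$ that Bilu's theorem actually demands.) However, this is a genuinely different route from the one that drives the rest of the paper. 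The paper models its main theorem on the Bombieri--Zannier argument, which is algebraic and inductive rather than measure-theoretic: for a fixed $f\in\Z[X_1,\dots,X_N]$ and $p$ large, either every zero of $f$ has height at least some $\epsilon(p)>0$, or the $p$-th power map sends zeros of $f$ to zeros of $f$; in the latter case $Z(f)$ is stable under $(X_1,\dots,X_N)\mapsto(X_1^p,\dots,X_N^p)$, and resultants let one induct on dimension. The paper transposes exactly this mechanism to $\overline{k(t)}$, replacing the $p$-th power endomorphism by the automorphism $t\mapsto t^M$ (Lemma~\ref{L:L1} and its use in Lemma~\ref{L:L3}). Your Bilu-style proof buys the elegant limiting statement about Galois orbits; the Bombieri--Zannier-style proof buys effective lower bounds and, crucially for this paper, survives in settings (positive characteristic, function fields) where there is no archimedean place at which to equidistribute.
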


The Bogomolov conjecture in the context of abelian varieties was proven by Ullmo \cite{Ullmo} for curves $V$ embedded in their Jacobians, and in the general case of any subvariety $V$ of an abelian variety $A$ by Zhang \cite{Zhang}. Both Ullmo and Zhang proved the Bogomolov conjecture via an equidistribution statement for points of small height on $A$. A generalization of the Bogomolov statement to semi-abelian varieties was obtained by David and Philippon \cite{daph}. 

The case of Bogomolov conjecture for any power of the multiplicative group (see Conjecture~\ref{Bogomolov conjecture for torus})  was first proved by Zhang in \cite{Z2}. Other proofs of the Bogomolov conjecture for $\mathbb{G}_m^n$ were given by Bilu \cite{Bilu} and  Bombieri and Zannier \cite{Bom}. Similarly to Ullmo and Zhang proofs of the Bogomolov Conjecture for abelian varieties, Bilu proved that the probability measures supported on Galois orbits of \emph{generic} algebraic points of height tending to $0$ converge weakly to the Lebesgue measure $\mu$ on  $\mathcal{C}^N$, where $\mathcal{C}$ is the complex unit circle.  
More precisely, if $\{P_n\}_{n\ge 1}\subset \left(\Qbar^*\right)^N$ is a sequence of points with the property that no proper algebraic subgroup of $\bG_m^N$ contains infinitely many $P_n$'s, then for each continuous function $f$, we have
$$\lim_{n\to\infty} \frac{1}{[\Q(P_n):\Q]}\cdot \sum_{\sigma\in\Gal(\Qbar/\Q)} f\left(P_n^\sigma\right) = \int_{\mathcal{C}^N} f d\mu$$

On the other hand, Bombieri and Zannier proof \cite{Bom} of Conjecture~\ref{Bogomolov conjecture for torus} followed a different path. In \cite[Lemma 1]{Bom}, Bombieri and Zannier show that that for any polynomial $f\in\Z[X_1,\dots,X_N]$, and any point $(x_1,\dots,x_N)\in\Qbar$, and for any prime number $p$ sufficiently large, there exists a positive real number $\epsilon(p)$ such that either $\h((x_1,\dots,x_N))\ge \epsilon(p)$, or $f(x_1^p,\dots,x_N^p)=0$. This result allows the authors of \cite{Bom} to conclude that either the points on the hypersurface $Z(f)$ have a height larger than some absolute positive lower bound, or the hypersurface is invariant under the endomorphism of $\bG_m^N$ given by $(X_1,\dots,X_N)\mapsto (X_1^p,\dots,X_N^p)$. In the latter case, one can see that this means $Z(f)$ is a finite union of torsion translates of subtori of $\bG_m^N$.

The approach of Bombieri and Zannier from \cite{Bom} inspired the author to extend their \cite[Lemma 1]{Bom} in positive characteristic by applying the Frobenius map to affine subvarieties of $\bA^N$ defined over $\overline{\F_p(t)}$. This allowed the author to obtain in \cite{Ghioca-bogo3} a Bogomolov type statement for affine varieties defined over $\overline{\F_p(t)}$.  The picture in positive characteristic for the Bogomolov conjecture is much different due to the varieties defined over finite fields. Indeed, if $V\subset\bG_m^N$ is any subvariety defined over $\overline{\F_p}$, then $V$ contains a Zariski dense set of points of height $0$ (all its points with coordinates in $\overline{\F_p}$). So, it is no longer true that only torsion translates of subtori of $\bG_m^N$ contain a Zariski dense set of points with small height; any \emph{constant} subvariety has this property as well. The group structure of the ambient space $\mathbb{G}_m^n$ disappears from the conclusion of a Bogomolov statement for $\mathbb{G}_m^n$; this motivated our approach from \cite{Ghioca-bogo3} in which the ambient space is simply the affine space, and not an algebraic torus as in \cite{Bom}. 

Motivated by a question of Zinovy Reichstein, we consider in this paper the same problem with respect to the height constructed with respect to a function field $K/k$ of arbitrary characteristic. So, let $k$ be an algebraically closed field of arbitrary characteristic, and let $\Omega_{k(t)}$  be all the inequivalent absolute values on $k(t)$. Each $v\in \Omega_{k(t)}$ corresponds either to the place at infinity $v_\infty$, i.e.   
$$v_\infty\left(\frac{f}{g}\right):=\deg(g)-\deg(f),$$
for nonzero $f,g\in k[t]$, or to a point $\alpha\in k$, i.e.,
$$v_\alpha\left(\frac{f}{g}\right):=\text{ord}_{t-\alpha}\left(\frac{f}{g}\right),$$
where $\text{ord}_{t-\alpha}(f/g)$ is the order of vanishing at $\alpha$ of the rational function $f/g$. 

For each finite extension $K$ of $k(t)$, we let $\Omega_K$ be the set of all (inequivalent) places of $K$ which lie above the places of $k(t)$. We normalize each (exponential) valuation $w\in\Omega_K$ so that the function $w:K\lra \bZ$ is surjective. In other words, for each nonzero $x\in k(t)$, and for each place $w\in\Omega_K$ lying above a place $v\in\Omega_{k(t)}$ we have 
$$w(x)=e(w\mid v)\cdot v(x),$$
where $e(w\mid v)$ is the ramification index for $w\mid v$. 
Then for each $x\in K$ we define its height:
$$\h(x):=\frac{1}{[K:k(t)]}\cdot \sum_{v\in \Omega_{k(t)}}\sum_{\substack{w\in\Omega_K\\w\mid v}} \max\{0, - w(x)\} .$$
We note that the above definition is independent of the choice of field $K$ containing $x$ because the places of a function field are \emph{coherent}, i.e., for each finite extensions $k(t)\subset K\subset L$, for each $v\in\Omega_K$, and for each nonzero $x\in K$ we have
\begin{equation}
\label{coherence of the place v}
v(x)=\frac{1}{[L:K]}\cdot \sum_{\substack{w\in\Omega_L\\ w\mid v}}w(x).
\end{equation}
With the notation from \cite{Serre}, the above condition is that $v$ is \emph{defectless}; this  follows from the arguments of \cite[Chapter 1, Section 4]{Serre} (Hypothesis (F) holds for algebras of finite type over
fields and so, it holds for localizations of such algebras; for each $v\in\Omega_K$ we
apply \cite[Propositions 10 and 11]{Serre} to the local ring of $v$).

Similarly, we define for any $n\in\N$, the height of $(x_1,\dots,x_n)\in \bA^n(K)$ be
$$\h((x_1,\dots,x_n)):=\frac{1}{[K:k(t)]}\cdot \sum_{v\in \Omega_{k(t)}}\sum_{\substack{w\in\Omega_K\\w\mid v}} \max\{0,-w(x_1),\cdots -w(x_n)\} .$$
For each $\epsilon \ge 0$ we let
$$S_{\epsilon}:=\{P\in \bA^n(\overline{k(t)})\text{ : }\h(P) \le \epsilon\}.$$
Then our main result is the following.

\begin{theorem} 
\label{main result}
Let $V\subset \bA^n$ be an affine subvariety defined over $\overline{k(t)}$. Let $W\subseteq V$ be the Zariski closure of $V(k)$. Then there exists $\epsilon>0$ such that for all $P\in (V\setminus W)(\overline{k(t)})$, we have $\h(P)\ge \epsilon$.
\end{theorem}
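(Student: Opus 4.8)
The plan is to adapt the Bombieri–Zannier/Frobenius-style argument from \cite{Ghioca-bogo3} to the function-field setting, treating the characteristic-zero and positive-characteristic cases in a unified way by exploiting a \emph{substitution} $t \mapsto t^p$ (or, more generally, a suitable tower of coverings of the $t$-line) in place of the Frobenius used over $\overline{\F_p(t)}$. First I would reduce to the case where $V$ is an irreducible hypersurface: if the theorem holds for hypersurfaces, then for a general $V$ write $V = \bigcap_i Z(f_i)$, take $W_i \subseteq Z(f_i)$ to be the Zariski closure of $Z(f_i)(k)$, and observe that $W = \bigcap_i W_i \cap V$ while any $P \in (V \setminus W)(\overline{k(t)})$ lies in some $(Z(f_i) \setminus W_i)(\overline{k(t)})$; taking $\epsilon = \min_i \epsilon(Z(f_i))$ finishes it. One may further clear denominators and assume $f \in k[t][X_1,\dots,X_n]$ is a polynomial whose coefficients are polynomials in $t$.

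The heart of the matter is a \emph{gap principle}: I would show that there is an integer $p$ (any prime exceeding the $t$-degree and the $X$-degree of $f$ will do, or in characteristic $p$ simply the characteristic itself) and a real number $\delta > 0$ such that for every $P = (x_1,\dots,x_n) \in \bA^n(\overline{k(t)})$, either $\h(P) \ge \delta$, or the point $P$ viewed over the field $k(t^{1/p})$ — equivalently, after the base change $t \mapsto t^p$ — lies on the variety $Z(f)$ where $f$'s coefficients have been substituted accordingly. Concretely, the substitution $\phi\colon t\mapsto t^p$ acts on heights: if $x\in\overline{k(t)}$ has height $\h(x)$ computed over $k(t)$, then the element $\phi^*(x)$ (the same algebraic expression with $t$ replaced by $t^p$) has height $p\cdot\h(x)$, because $\phi$ is a degree-$p$ map of the base curve ramified only over $0$ and $\infty$, multiplying all valuations by $p$ on the unramified part. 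So if $\h(P)$ is small, the height of the ``unscaled'' point stays small, and the usual interpolation-by-monomials estimate (as in \cite[Lemma 1]{Bom}) forces a polynomial identity. Here the key input is that, over a field of characteristic $0$ or $p$, when we expand $f$ along a basis of $t$-power translates and the height is below the threshold, the relevant linear-algebra system over $k$ is forced to be degenerate, which yields that $Z(f)$ is invariant under the self-map induced by $t\mapsto t^p$ on those coordinates that actually vary — and a point fixed (up to this covering) by $t\mapsto t^p$ must be constant, i.e. lie in $\bA^n(k)$.

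The main obstacle I anticipate is making the ``substitution $t\mapsto t^p$'' argument genuinely produce the invariance statement and then correctly identifying its fixed locus with $W$. In the torus case of \cite{Bom} the Frobenius-type map $x\mapsto x^p$ has an obvious group-theoretic meaning and its invariant subvarieties are exactly torsion translates of subtori; here the analogous map on $\bA^n$ over $\overline{k(t)}$ is $\sigma\colon (x_i(t))\mapsto (x_i(t^p))$, and I must show that if $Z(f)$ is $\sigma$-invariant then its intersection with the region of small height is contained in the constant subvariety $W$. This requires a descent/iteration: apply $\sigma$ repeatedly, control how the height behaves along the orbit (it should \emph{stay} bounded precisely when the point is constant), and invoke a Northcott-type finiteness — for a fixed bound $B$ and fixed degree $d=[\overline{k(t)}(P):k(t)]$ there are only finitely many $P$ of height $\le B$ \emph{with bounded degree}, but since $k$ is algebraically closed and possibly huge, Northcott fails outright; instead one argues that a point of height $0$ over $\overline{k(t)}$ is constant (each coordinate has no poles over any extension, hence lies in the integral closure of $k$ in $\overline{k(t)}$, which is $k$ since $k$ is algebraically closed), and pushes the small-height case to the height-$0$ case by the scaling $\h(\sigma^m P)=p^m\h(P)\to\infty$ unless $\h(P)=0$, combined with the $\sigma$-invariance keeping $\sigma^m P$ on $V$. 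Assembling the threshold $\epsilon$ uniformly over $V$ from the hypersurface thresholds, and checking that $W$ as defined in the statement is exactly the locus this argument cannot push down, is the remaining bookkeeping.
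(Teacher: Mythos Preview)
Your gap principle is the right engine --- it is exactly the paper's Lemma~\ref{L:L1}: for a reduced $f\in k[t][X_1,\dots,X_n]$ and $M$ large enough, every zero $P$ of $f$ satisfies either $\h(P)\ge \frac{1}{2dn}$ or $f(\sigma(P))=0$, where $\sigma(t)=t^M$. The problem is everything you do \emph{after} the gap principle.

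Your iteration argument does not give a uniform $\epsilon$. From the dichotomy you only learn, for an individual small-height $P$, that $\sigma(P)\in Z(f)$; iterating yields $\sigma^j(P)\in Z(f)$ only for those $j$ with $M^j\h(P)<\delta$, and then you lose control. The fact that $\h(\sigma^m P)=M^m\h(P)\to\infty$ while $\sigma^m P$ stays on $V$ is not a contradiction: $V$ has points of arbitrarily large height. So nothing prevents a sequence $P_n\in V\setminus W$ with $\h(P_n)\to 0$, each satisfying $\sigma(P_n)\in V$ for finitely many steps before escaping the threshold. You also conflate ``$\sigma(P)\in Z(f)$ for this particular $P$'' with ``$Z(f)$ is $\sigma$-invariant''; the latter would force $f$ to be (up to scalar) defined over $k$, in which case $W_f=Z(f)$ and there is nothing to prove --- so the interesting case is precisely when $Z(f)$ is \emph{not} $\sigma$-invariant, and your fixed-locus discussion does not apply.

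What the paper does instead is an \emph{elimination step plus induction on $n$}. If $f$ is irreducible and not defined over $k$, then $f\nmid f^{\sigma^{-1}}$ (Lemma~\ref{L:L2}), so $f$ and $f^{\sigma^{-1}}$ are coprime. For a small-height zero $P=(x_1,\dots,x_n)$ of $f$, the gap principle gives $f^{\sigma^{-1}}(P)=0$ as well, hence the resultant $R=\operatorname{Res}_{X_n}(f,f^{\sigma^{-1}})$ is a \emph{nonzero} polynomial in $X_1,\dots,X_{n-1}$ vanishing at $(x_1,\dots,x_{n-1})$. Now apply the inductive hypothesis in $\bA^{n-1}$. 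This yields Lemma~\ref{L:L3} (a proper $k$-subvariety $Z\subset\bA^n$, not yet inside $V$, catching all small-height points of $V$); a Noetherian minimization then forces the minimal such $Z$ to lie in $V$, giving Theorem~\ref{main result}. Your proposed reduction to hypersurfaces at the outset, with the claimed identity $W=\bigcap_i W_i\cap V$, is both unjustified and unnecessary once you have this inductive scheme. (An alternative route that would also work, and is closer in spirit to what you sketch, is to apply the gap principle to \emph{all} defining equations of $V$ simultaneously --- not reducing to a single hypersurface --- conclude that $\sigma$ maps a Zariski-dense set of small-height points of $V$ into $V$, deduce $V^{\sigma}=V$ and hence $V$ is defined over $k$, i.e.\ prove Theorem~\ref{main result 0} directly, and then recover Theorem~\ref{main result} by the standard Noetherian descent on the closures $\overline{V\cap S_\epsilon}$.)
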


The following result is an alternative reformulation.
\begin{theorem}
\label{main result 0}
Let $V\subset \bA^n$ be an affine subvariety defined over $\overline{k(t)}$. If for each $\epsilon >0$, the subset $V(\overline{k(t)})\cap S_\epsilon$ is Zariski dense in $V$, then $V$ is defined over $k$.
\end{theorem}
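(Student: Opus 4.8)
The plan is to reduce Theorem~\ref{main result 0} to Theorem~\ref{main result}, and then to prove the latter by induction on $n$. The reduction is formal: if $V(\overline{k(t)})\cap S_\epsilon$ is Zariski dense in $V$ for every $\epsilon$, then picking $\epsilon$ smaller than the constant furnished by Theorem~\ref{main result} forces $W=V$; and if $V$ is the Zariski closure of $V(k)$ then $V$ is defined over $k$, since writing any $F\in I(V)$ in a $k$-basis of $\overline{k(t)}$ as $F=\sum_\lambda e_\lambda F_\lambda$ with $F_\lambda\in k[X_1,\dots,X_n]$, each $F_\lambda$ vanishes on $V(k)$ by $k$-linear independence, hence on $V$ by density, so $I(V)$ is generated by $I(V)\cap k[X_1,\dots,X_n]$.

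For Theorem~\ref{main result} I would first make two routine reductions, the case $n=1$ being immediate. First, we may assume $V$ is irreducible: its components are defined over the algebraically closed field $\overline{k(t)}$, a subset is Zariski dense in $V$ precisely when it is dense in each component, and $W$ and the conclusion distribute over components. Second, if some coordinate function is constant on $V$, say $x_i\equiv a_i$, then either $a_i\notin k$, so that $h(P)\ge h(a_i)>0$ for every $P\in V(\overline{k(t)})$ and we are done, or $a_i\in k$, in which case discarding the $i$-th coordinate is an isomorphism of $V$ onto a subvariety of $\bA^{n-1}$ which is defined over $k$ iff $V$ is and which carries the same height function (a coordinate lying in $k$ never contributes to the maxima defining $h$), so we finish by the inductive hypothesis. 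Thus we may assume every coordinate is non-constant on $V$, and also that $V$ is a proper subvariety of $\bA^n$ (otherwise $V=\bA^n$ is defined over $k$).

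The crux is then a gap principle: there is a constant $M=M(V)$ such that every $P\in(V\setminus W)(\overline{k(t)})$ with $h(P)\le 1$ is defined over a finite extension $L$ of $k(t)$ with $[L:k(t)]\le M$. Granting this, for such $P$ one has $[L:k(t)]\cdot h(P)\in\Z_{\ge 0}$, hence $h(P)\in\tfrac1{M!}\Z_{\ge 0}$; moreover $h(P)=0$ would force every coordinate of $P$ to be regular at every place of the smooth projective model, hence to lie in $k$, contradicting $P\notin W\supseteq V(k)$. So $h(P)\ge\tfrac1{M!}$, and $\epsilon:=\tfrac1{M!}$ works. To prove the gap principle I would follow Bombieri--Zannier~\cite{Bom}, in the form adapted to positive characteristic in~\cite{Ghioca-bogo3}. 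In characteristic $p$ one writes a defining equation of $V$ as $F=\sum_j t^j g_j(X)$ with the $g_j$ over a fixed finite field $\F_q$; applying the $q^\ell$-power Frobenius of $\overline{k(t)}$ to $F(P)=0$ and using that the $g_j$ are fixed by it shows that $\operatorname{Frob}_{q^\ell}(P)$ lies on the variety obtained from $V$ by substituting $t^{q^\ell}$ for $t$; since $h(\operatorname{Frob}_{q^\ell}(P))=q^\ell h(P)$ while that twisted variety has ``height'' $q^\ell$ times that of $V$, Zariski-density of the low-height points forces, for $\ell$ large in terms of $V$, a rigidity (invariance of $V$ under $t\mapsto t^{q^\ell}$) which, after further analysis as in~\cite{Ghioca-bogo3}, shows $V$ is defined over $\overline{\F_p}$, bounding the field of definition of all the points concerned. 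In characteristic $0$ there is no Frobenius, and here I would instead use the derivation $D=d/dt$ on $\overline{k(t)}$, whose kernel is $k$: if $V$ is not defined over $k$ there is $F\in I(V)$ with $G:=DF\notin I(V)$, and differentiating $F(P)=0$ gives
\[ G(P) \;=\; -\sum_{i=1}^{n}\frac{\partial F}{\partial X_i}(P)\cdot\frac{dx_i}{dt}, \]
from which the aim is to extract — working place by place and setting aside the finitely many places of $k(t)$ that are ``bad'' for the coefficients of $F$ — a lower bound for $h(P)$ depending only on $V$; alternatively, one may spread $V$ and $k$ out over a finitely generated $\Z$-algebra and reduce modulo a maximal ideal, invoking the positive-characteristic case and transporting the conclusion back by a specialization argument.

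I expect the main obstacle to be exactly this characteristic-zero gap principle. The naive estimate for $h\bigl(dx_i/dt\bigr)$ in terms of $h(x_i)$ picks up the ramification of the covering of $\PP^1$ by the curve over which $P$ is defined, and this ramification is not bounded as $P$ ranges over points of small height; making the derivation argument succeed therefore requires choosing $F$ (or the derivation) so that the dangerous local contributions in the displayed identity cancel, or else controlling carefully the distortion of heights in the reduction-modulo-$p$ approach.
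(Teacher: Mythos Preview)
Your reduction of Theorem~\ref{main result 0} to Theorem~\ref{main result} is fine, but the core of your proposal --- the ``gap principle'' bounding the degree of the field of definition of small-height points --- is false as stated, and the rest of the argument collapses with it. Take $V\subset\bA^2$ cut out by $y^2=tx$ in characteristic zero. Then $V(k)=\{(0,0)\}$, so $W=\{(0,0)\}$. For each odd $N\ge 1$ set $P_N=(t^{1/N},t^{(N+1)/(2N)})$; one checks $P_N\in V$, the field of definition is $k(t^{1/N})$ of degree $N$ over $k(t)$, and $\h(P_N)=(N+1)/(2N)\le 1$. Thus points on $V\setminus W$ of height at most $1$ have unbounded degree, and no bound $[L:k(t)]\le M(V)$ exists. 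In fact your own sketches in either characteristic do not attempt to bound degrees at all: what the Bombieri--Zannier/Frobenius argument actually yields is a dichotomy ``either $\h(P)$ is bounded below, or $P^\sigma$ again lies on $V$'', which is a statement about invariance of $V$ under an automorphism, not about fields of definition of points.

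You are right that the obstacle is characteristic zero, but the derivation and reduction-mod-$p$ routes you outline are both problematic for exactly the reasons you mention (uncontrolled ramification; distortion of heights under specialization). The paper sidesteps all of this with a single observation that works uniformly in every characteristic: for any integer $M>1$ there is an automorphism $\sigma$ of $\overline{k(t)}$ fixing $k$ with $\sigma(t)=t^M$, and for every $g\in k[t]$ one has the elementary divisibility $(t^M-t)\mid(\sigma(g)-g)$. This plays exactly the role that $(t^q-t)\mid(g^q-g)$ plays for Frobenius, and lets the Bombieri--Zannier computation go through verbatim: if $f(P)=0$ but $f(P^\sigma)\ne 0$, subtracting $\sigma(f(P))=0$ and using the divisibility produces a nonzero element divisible by $t^M-t$, and the product formula forces $\h(P)\ge 1/(2dn)$ once $M\ge 2\h(f)$. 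Since $\h(\sigma(x))=M\h(x)$, the fixed field of $\sigma$ is $k$, so invariance of $V$ under $\sigma$ gives descent to $k$. No separate treatment of characteristic zero is needed.
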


\begin{remark}
\label{R:extension}
The result of Theorem~\ref{main result} (and its reformulation) extends to any closed projective subvariety $V$ of a projective space $\mathbb{P}^n$. Indeed, we cover $\mathbb{P}^n$ by finitely many open affine spaces $\{U_i\}_i$, and then apply Theorem~\ref{main result} to each $V\cap U_i$ (which is a closed subvariety of the affine space $U_i$).
\end{remark}

We prove Theorem~\ref{main result} using the same strategy employed in \cite{Ghioca-bogo3}, only that this time we replace the Frobenius endomorphism by a suitable automorphism $\sigma$ of $\overline{k(t)}$. We show that for any point $P\in V(\overline{k(t)})$, either $\h(P)$ is uniformly bounded from below away from $0$, or $P^{\sigma}\in V(\overline{k(t)})$. If the latter occurs generically, then $V$ is invariant under $\sigma$ and therefore it is defined over the fixed field of $\sigma$, which is $k$.

Theorem~\ref{main result} yields a similar result for an arbitrary (finite) transcendence degree function field. Indeed, let $k$ be an algebraically closed field, and let $K/k$ be a finite transcendence degree function field. Let $t_1,\dots, t_r\in K$ be algebraically independent elements such that $K/k(t_1,\dots, t_r)$ is a finite extension. For each $i=1,\dots, r$ we let $K_i$ be the algebraic closure of $k(t_1,\dots, t_i)$ in $K$. Then $K_r=K$; we also let $K_0:=k$. 
Let $V\subset \bA^n$ be an affine variety defined over $K$, and assume it's not defined over $k$ (otherwise Theorem~\ref{main result} and its consequences hold trivially). Then there exists a smallest (positive) integer $i$ such that $V$ is defined over $K_i$ (but it's not defined over $K_{i-1}$). We let $\h$ be the Weil height constructed with respect to the function field $K_i/K_{i-1}$ (which is a function field of transcendence degree equal to $1$). Then Theorem~\ref{main result} yields that there exists a positive real number $\epsilon:=\epsilon(V)$ such that if $P\in V(\overline{K_i})$ and $h(P)<\epsilon$, then $P\in W(\overline{K_i})$, where $W$ is the largest subvariety of $V$ defined over $\overline{K_{i-1}}$.

\medskip

\emph{Acknowledgments.} We thank the Institute of Mathematics Academia Sinica for its hospitality, and the organizers Liang-Chung Hsia and Tzu-Yueh Julie Wang of the conference on ``Diophantine Problems and Arithmetic Dynamics'' from Taipei for creating a stimulating research environment where the results of this paper were disseminated.

\section{Proof of our main result}
\label{se:proof}

Our proof follows the strategy from \cite{Ghioca-bogo3}; when the proof is identical with the one from \cite{Ghioca-bogo3} we refer to our earlier paper, otherwise we present the argument entirely.  
Unless otherwise stated, all our subvarieties are closed; we continue with our notation from Theorem~\ref{main result}. We start with a definition. 
\begin{definition}
\label{D:reduced polynomials}
We call \emph{reduced} a non-constant polynomial 
$$f\in k[t][X_1,\dots,X_n],$$ 
whose coefficients $a_i$ have no non-constant common divisor in $k[t]$. We define the \emph{height}  $\h(f)$ of the polynomial $f$ as the maximum of the degrees of the coefficients $a_i\in k[t]$ of $f$.
\end{definition}

For some integer $M>1$, let  $\sigma:=\sigma_M$ be an automorphism of $\overline{k(t)}$ which fixes the elements of $k$, and maps $t$ into $t^M$.

\begin{lemma}
\label{height of sigma}
For each $x\in \overline{k(t)}$, we have $h(\sigma(x))=M\cdot h(x)$.
\end{lemma}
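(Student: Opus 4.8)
The plan is to reduce the claim to the level of a single place of $k(t)$ and track how $\sigma_M$ acts on valuations. First I would observe that it suffices to prove the statement for $x\in k(t)$ itself, and then bootstrap to $\overline{k(t)}$ using the coherence/defectlessness of places recorded in~\eqref{coherence of the place v}: if $x$ lies in a finite extension $K/k(t)$, then $\sigma_M$ extends to an automorphism of $\overline{k(t)}$ carrying $K$ to some conjugate field $K'$, and the height of $x$ computed in $K$ equals the height of $\sigma_M(x)$ computed in $K'$ provided we understand the base case. So the heart of the matter is: for $x=f/g\in k(t)^\times$ with $f,g\in k[t]$, compare $\sum_{v\in\Omega_{k(t)}}\max\{0,-v(x)\}$ with the corresponding sum for $\sigma_M(x)=f(t^M)/g(t^M)$.

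Next I would carry out the place-by-place bookkeeping. Substituting $t\mapsto t^M$ sends a polynomial of degree $d$ to one of degree $Md$, so at the infinite place $v_\infty(\sigma_M(x)) = M\deg(g)-M\deg(f) = M\,v_\infty(x)$. At a finite place: if $\alpha\in k$ and $\alpha\neq 0$, then since $k$ is algebraically closed the polynomial $t^M-\alpha$ factors as $\prod_{i=1}^M(t-\beta_i)$ with $M$ distinct roots $\beta_i$ (using that $\mathrm{char}(k)\nmid M$ is not needed here because we only need the $\beta_i$ distinct when they are — more carefully, one treats separable and inseparable parts, but the total order of vanishing is what matters), and $\mathrm{ord}_{t-\beta_i}(\sigma_M(x)) = \mathrm{ord}_{t-\alpha}(x)$ for each root $\beta_i$ of $t^M-\alpha$. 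The place $t=0$ needs separate attention: $\mathrm{ord}_t(\sigma_M(x)) = M\cdot\mathrm{ord}_t(x)$. Summing $\max\{0,-\cdot\}$ over all finite places and the infinite place, every contribution gets multiplied by exactly $M$ — either because a single place of order $m$ splits into $M$ places each of order $m$ (the $\alpha\neq 0$ case, contributing $M\max\{0,-m\}$), or because the order itself is multiplied by $M$ (the places $t=0$ and $t=\infty$). Hence $h(\sigma_M(x)) = M\cdot h(x)$ for $x\in k(t)^\times$; the case $x=0$ and the case $x\in k$ are trivial since both sides vanish.

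Finally I would handle the general $x\in\overline{k(t)}$ by the reduction indicated above. Write $K=k(t)(x)$ and note $h(x)$ is computed by averaging $\max\{0,-w(x)\}$ over $w\in\Omega_K$, normalized by $[K:k(t)]$. The automorphism $\sigma_M$ of $\overline{k(t)}$ restricts to an isomorphism $K\xrightarrow{\sim}\sigma_M(K)=:K'$ with $[K':k(t)]=[K:k(t)]$, and it induces a bijection $\Omega_K\to\Omega_{K'}$ compatible with the action on $k(t)$-valuations in the sense that $w'(\sigma_M(y))$ for the image place $w'$ relates to $w(y)$ via the already-established scaling on $\Omega_{k(t)}$. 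Concretely, each $w\in\Omega_K$ lies over some $v\in\Omega_{k(t)}$ with $w(y)=e(w\mid v)v(y)$ for $y\in k(t)$; the image place $w'$ lies over $\sigma_M(v)\in\Omega_{k(t)}$ (the place corresponding to the transformed point), and one checks the ramification indices match, so that $\sum_{w\in\Omega_K}\max\{0,-w(x)\} = \sum_{w'\in\Omega_{K'}}\max\{0,-w'(\sigma_M(x))\}$ after accounting for the factor $M$ coming from the $\Omega_{k(t)}$-level computation via coherence~\eqref{coherence of the place v}. I expect the main obstacle to be precisely this last verification — making the correspondence between places of $K$ and places of $K'$ and the matching of ramification indices fully rigorous — but it is a standard consequence of $\sigma_M$ being a field automorphism together with the coherence property, so it should go through cleanly.
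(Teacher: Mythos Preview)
Your approach is quite different from the paper's --- the paper works with the minimal polynomial of $x$ over the tower $L=\bigcup_{n\ge 1}k(t^{1/M^n})$, shows that $\sigma$ restricts to an automorphism of $L$, and then reads off both $h(x)$ and $h(\sigma(x))$ from a formula of Derksen--Masser expressing the height of an algebraic element in terms of the degrees of the coefficients of its minimal polynomial. Your valuation-transport idea is more elementary and would avoid that external reference, but the bootstrap step as written contains a real error.

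The problem is the assertion $[K':k(t)]=[K:k(t)]$. The automorphism $\sigma_M$ does \emph{not} fix $k(t)$: it carries $k(t)$ isomorphically onto the proper subfield $k(t^M)$. What the isomorphism $\sigma_M\colon K\to K'$ actually gives is $[K':k(t^M)]=[K:k(t)]$, and for a general $K$ the field $K'=\sigma_M(K)$ need not contain $t$ at all, so it is not even an extension of $k(t)$. Moreover, the bijection $w\leftrightarrow w'=w\circ\sigma_M^{-1}$ between normalized places of $K$ and of $K'$ satisfies $w'(\sigma_M(x))=w(x)$ exactly, so the two sums
\[
\sum_{w\in\Omega_K}\max\{0,-w(x)\}\quad\text{and}\quad\sum_{w'\in\Omega_{K'}}\max\{0,-w'(\sigma_M(x))\}
\]
are \emph{equal}, with no factor of $M$; your ``accounting for the factor $M$ via coherence'' is not where the $M$ lives.

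The repair is short. Choose $K$ to contain $t^{1/M}$ (permissible, since the height is independent of the ambient field). Then $\sigma_M(t^{1/M})=\zeta t$ for some $M$-th root of unity $\zeta\in k$, so $t\in K'$, and $K'$ is an honest finite extension of $k(t)$ with
\[
[K':k(t)]=\frac{[K':k(t^M)]}{[k(t):k(t^M)]}=\frac{[K:k(t)]}{M}.
\]
Dividing the equal sums above by $[K':k(t)]$ and $[K:k(t)]$ respectively now gives $h(\sigma_M(x))=M\cdot h(x)$ directly. Notice that with this correction your base-field computation for $x\in k(t)$ becomes unnecessary: the factor $M$ arises entirely from the degree drop $[K':k(t)]=[K:k(t)]/M$, not from any place-splitting analysis.
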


\begin{proof}
Let 
$$L:=\bigcup_{n\ge 1} k\left(t^{\frac{1}{M^n}}\right).$$ 
We claim that $\sigma$ restricts to an automorphism of $L$. First of all, it is clear that for each positive integer $n$, there exists an $M$-th root of unity $\zeta_n\in k$ such that $\sigma(t^{1/M^n})=\zeta_n t^{1/M^{n-1}}$. So, $\sigma(\zeta_n^{-1}t^{1/M^n})=t^{1/M^{n-1}}$ showing that indeed $\sigma$ restricts to an automorphisms of $L$ (note that $k$ is algebraically closed).

Clearly, we may assume $x\ne 0$. 
Let $f\in L[z]$ be a polynomial of minimal degree such that $f(x)=0$. Since $f$ has finitely many coefficients (say, $\deg(f)=d\ge 1$), then there exists $N\in\bN$ such that $f$ has all its coefficients in $k\left(t^{1/M^N}\right)$. For the sake of simplifying the notation, we let $T:=t^{1/M^N}$. Since $k[T]$ is a PID, we may assume $f\in k[T][z]$ and moreover, the coefficients of $f$ are all relatively prime. Furthermore, $f$ is irreducible in $k[T][z]$; also let $D:=\h_T(f)$ be the maximum of the degrees (in $T$) of the coefficients of $f$. So, applying \cite[Lemma 2.1]{Derksen-Masser}, we conclude that
$$\h(x)=\frac{D}{dM^N}.$$
An  observation regarding our formula above and \cite[Lemma 2.1]{Derksen-Masser}: because our height is defined relative to $k(t)$, while in \cite[Lemma 2.1]{Derksen-Masser}  the height is computed relative to the field $k(T)=k\left(t^{1/M^N}\right)$, the factor $M^N$ appears in the denominator of our formula.

On the other hand, we claim that $f^\sigma\in L[z]$ is also irreducible, where $f^\sigma$ is the polynomial obtained by applying $\sigma$ to each coefficient of $f$. Indeed, if $f^\sigma$ were reducible over $L$, then there exist nonconstant polynomials $g,h\in L[z]$ such that $f^\sigma = g\cdot h$. But then $f=g^{\sigma^{-1}}\cdot h^{\sigma^{-1}}$, and $g^{\sigma^{-1}},h^{\sigma^{-1}}\in L[z]$ which thus contradicts the hypothesis that $f$ is irreducible in $L[z]$. Moreover, the coefficients of $f^{\sigma}$ are relatively prime. Indeed, because the coefficients $\{a_i\}_{0\le i\le d}$ of $f$ are relatively prime there exist $b_i\in k[T]$ such that $\sum_{i=0}^d a_i b_i = 1$, and so, $\sum_{i=0}^d \sigma(a_i)\sigma(b_i)=1$ showing that also the coefficients $\sigma(a_i)$ of $f^\sigma$ are relatively prime. Hence, applying again \cite[Lemma 2.1]{Derksen-Masser} we conclude that
$$\h(\sigma(x))=\frac{M\cdot D}{d M^N},$$
since $\deg_T(\sigma(a_i))=M\cdot \deg_T(a_i)$ for each $i$. Thus, indeed $\h(\sigma(x))=M\cdot \h(x)$.
\end{proof}

\begin{corollary}
\label{cor: height of affine points}
For each $x_1,\dots,x_n\in \overline{k(t)}$ we have
$$\h((\sigma(x_1),\dots, \sigma(x_n)))\le nM\h((x_1,\dots,x_n)).$$
\end{corollary}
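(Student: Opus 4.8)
The plan is to reduce the $n$-variable statement to the one-variable Lemma~\ref{height of sigma}, using only the elementary comparison between the height of a point and the heights of its coordinates. First I would record that general comparison: for any $y_1,\dots,y_n\in\overline{k(t)}$, any finite extension $K$ of $k(t)$ containing all the $y_j$, every place $w\in\Omega_K$, and every index $i$, one has the pointwise inequalities
$$\max\{0,-w(y_i)\}\ \le\ \max\{0,-w(y_1),\dots,-w(y_n)\}\ \le\ \sum_{j=1}^n\max\{0,-w(y_j)\}.$$
Summing over all $w\mid v$, then over $v\in\Omega_{k(t)}$, and dividing by $[K:k(t)]$ — the result being independent of the choice of $K$ by coherence of the places, cf.~\eqref{coherence of the place v} — this yields
$$\h(y_i)\ \le\ \h((y_1,\dots,y_n))\ \le\ \sum_{j=1}^n\h(y_j)\qquad(1\le i\le n).$$

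Next I would combine this with Lemma~\ref{height of sigma} in three steps. Apply the right-hand inequality to the point $(\sigma(x_1),\dots,\sigma(x_n))\in\bA^n(\overline{k(t)})$ (choosing any finite extension of $k(t)$ containing the $\sigma(x_j)$, which exists since each $\sigma(x_j)$ lies in $\overline{k(t)}$) to get $\h((\sigma(x_1),\dots,\sigma(x_n)))\le\sum_{j=1}^n\h(\sigma(x_j))$; then replace each $\h(\sigma(x_j))$ by $M\cdot\h(x_j)$ using Lemma~\ref{height of sigma}; then use the left-hand inequality to bound each $\h(x_j)$ by $\h((x_1,\dots,x_n))$. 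Chaining the three steps gives
$$\h((\sigma(x_1),\dots,\sigma(x_n)))\ \le\ \sum_{j=1}^n M\,\h(x_j)\ \le\ nM\,\h((x_1,\dots,x_n)),$$
which is exactly the assertion.

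I do not expect a genuine obstacle here. The only point worth stating explicitly is that the two coordinatewise bounds above hold \emph{pointwise} at each valuation of $K$, so they survive the averaging that defines $\h$ without any loss; granting that, Lemma~\ref{height of sigma} supplies the sole nontrivial input and the remainder is a one-line chain of inequalities.
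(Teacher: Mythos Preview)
Your proposal is correct and follows essentially the same approach as the paper: both arguments bound the height of the tuple by the sum of the coordinate heights, apply Lemma~\ref{height of sigma} to each coordinate, and then bound each $\h(x_j)$ by $\h((x_1,\dots,x_n))$. The only cosmetic difference is that you isolate the coordinatewise comparison $\h(y_i)\le\h((y_1,\dots,y_n))\le\sum_j\h(y_j)$ as a separate observation before chaining the inequalities, whereas the paper carries out the same estimates inline at the level of individual places.
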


\begin{proof}
Let $K$ be a finite extension of $k(t)$ containing each $x_i$ and each $\sigma(x_i)$. Using Lemma~\ref{height of sigma}, we obtain
\begin{eqnarray*}
\h((\sigma(x_1),\dots,\sigma(x_n)))\\
& = \frac{1}{[K:k(t)]}\sum_{w\in\Omega_K} \max\{0,-w(\sigma(x_1)),\dots, -w(\sigma(x_n))\}\\
& \le \frac{1}{[K:k(t)]}\sum_{w\in \Omega_K} \sum_{i=1}^n \max\{0, -w(\sigma(x_i))\\
& = \frac{1}{[K:k(t)]}\sum_{i=1}^n \h(\sigma(x_i))\\
& = \frac{M}{[K:k(t)]}\sum_{i=1}^n \h(x_i)\\
& = \frac{M}{[K:k(t)]}\sum_{w\in\Omega_K} \sum_{i=1}^n \max\{0, - w(x_i)\}\\
& \le \frac{Mn}{[K:k(t)]}\sum_{w\in\Omega_K} \max\{0,-w(x_1),\dots, -w(x_n)\}\\
& = Mn\h((x_1,\dots, x_n)),
\end{eqnarray*}
as desired.
\end{proof}

The following result is also an easy corollary of Lemma~\ref{height of sigma}.
\begin{lemma}
\label{fixed field of sigma}
The fixed field of $\sigma$ is $k$.
\end{lemma}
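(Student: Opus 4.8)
The plan is to show directly that if $x \in \overline{k(t)}$ satisfies $\sigma(x) = x$, then $x \in k$, and conversely that every element of $k$ is fixed (the converse being immediate from the definition of $\sigma$, since $\sigma$ was chosen to fix $k$ pointwise). So the content is the forward direction.

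First I would invoke Lemma~\ref{height of sigma}: if $\sigma(x) = x$, then $h(x) = h(\sigma(x)) = M \cdot h(x)$, and since $M > 1$ this forces $h(x) = 0$. Thus it suffices to understand which elements of $\overline{k(t)}$ have height zero and are fixed by $\sigma$. Now $h(x) = 0$ means that for every place $w \in \Omega_K$ (where $K$ is a finite extension of $k(t)$ containing $x$), we have $w(x) \ge 0$; that is, $x$ is integral at every finite place and also at the place(s) above infinity, so $x$ lies in the ring of functions regular everywhere on the corresponding smooth projective curve, which is just the constant field. More elementarily: an element of $\overline{k(t)}$ of height $0$ is algebraic over $k$, hence (as $k$ is algebraically closed) lies in $k$. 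So in fact $h(x) = 0 \iff x \in k$, and the lemma follows immediately once we know $h(x) = 0$.

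Alternatively, if one prefers not to quote the characterization of height-zero elements, I would argue as follows. Take $x \in \overline{k(t)}$ with $\sigma(x) = x$; we have already deduced $h(x) = 0$. Working in the field $L = \bigcup_{n \ge 1} k(t^{1/M^n})$ from the proof of Lemma~\ref{height of sigma} (after enlarging if necessary so that $x$ is algebraic over $L$ — actually $x$ is algebraic over $k(t) \subset L$ already), let $f \in k[T][z]$ be the reduced minimal polynomial of $x$ over $k(T)$ with $T = t^{1/M^N}$, of degree $d$ and with $D = h_T(f)$ the maximal $T$-degree of its coefficients. The computation in Lemma~\ref{height of sigma} gives $h(x) = D/(dM^N)$, so $h(x) = 0$ forces $D = 0$, i.e.\ $f \in k[z]$, so $x$ is algebraic over $k$ and hence $x \in k$ since $k = \overline{k}$. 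Combined with the trivial inclusion $k \subseteq \{x : \sigma(x) = x\}$, this gives that the fixed field of $\sigma$ is exactly $k$.

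I do not expect any real obstacle here: this is a direct corollary of Lemma~\ref{height of sigma} together with the standard fact that the only elements of a function field over an algebraically closed constant field that are regular at all places are the constants. The only point requiring a word of care is making sure $x$ is genuinely algebraic over $k$ (not merely of small height) before concluding $x \in k$ — and that is exactly what the vanishing of $D$ in the minimal-polynomial description delivers.
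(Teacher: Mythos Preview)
Your proposal is correct and follows essentially the same approach as the paper: deduce $h(x)=Mh(x)$ from Lemma~\ref{height of sigma}, conclude $h(x)=0$, and then use that the height-zero elements of $\overline{k(t)}$ are exactly the constants $k$. The paper's proof is a two-line version of your first argument; your additional justification (via integrality at all places, or via $D=0$ in the minimal polynomial) simply spells out the last step that the paper takes for granted.
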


\begin{proof}
Let $x\in\overline{k(t)}$ such that $\sigma(x)=x$. Then by Lemma~\ref{height of sigma} we have that $\h(x)=\h(\sigma(x))=M\h(x)$; so $\h(x)=0$ (because $M>1$). Therefore $x\in k$ since they are the only points in $\overline{k(t)}$ of height equal to $0$.
\end{proof}

The following result is key for our proof, and it is similar to \cite[Lemma~3.2]{Ghioca-bogo3}.

\begin{lemma}
\label{L:L1}
Let $f\in k[t][X_1,\dots,X_n]$ be a reduced polynomial of total degree $d$. Let $M$ be an integer satisfying $M\ge \max\{1,2\h(f)\}$, and let $\sigma$ be an automorphism of $\overline{k(t)}$ such that $\sigma$ restricts to the identity morphism on $k$, and $\sigma(t)=t^M$. If $(x_1,\dots,x_n)\in\mathbb{A}^n_{\overline{k(t)}}$ satisfies $f(x_1,\dots,x_n)=0$, then either
$$\h(x_1,\dots,x_n)\ge\frac{1}{2dn}$$
or
$$f(\sigma(x_1),\dots, \sigma(x_n))=0.$$
\end{lemma}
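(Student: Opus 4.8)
\emph{Proof strategy.} The plan is to reformulate the statement in terms of intersection theory of curves on $\mathbb{P}^1\times\mathbb{P}^n$ over $k$. One may assume $(x_1,\dots,x_n)\notin k^n$, for otherwise $\sigma$ fixes each coordinate and $f(\sigma(x_1),\dots,\sigma(x_n))=f(x_1,\dots,x_n)=0$. Put $K:=k(t)(x_1,\dots,x_n)$, $e:=[K:k(t)]$, and let $C$ be the smooth projective curve over $k$ with function field $K$. The pair $\bigl(t,\,[1:x_1:\cdots:x_n]\bigr)$ defines a morphism $C\to\mathbb{P}^1\times\mathbb{P}^n$ that is birational onto an irreducible curve $\Gamma$ with generic point $\bigl(t,[1:x_1:\cdots:x_n]\bigr)$. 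Let $\mathcal{Z}\subset\mathbb{P}^1\times\mathbb{P}^n$ be the hypersurface cut out by the bihomogenization of $f$; writing $p_1,p_2$ for the two projections and $H_1,H_2$ for the hyperplane classes, the class of $\mathcal{Z}$ is $\h(f)\,p_1^{*}H_1+d\,p_2^{*}H_2$ (recall $\deg_t f=\h(f)$), and $f(x_1,\dots,x_n)=0$ gives $\Gamma\subseteq\mathcal{Z}$. I would record the dictionary $\Gamma\cdot p_1^{*}H_1=e$ (the function $t$ has degree $e$ on $C$) and $\Gamma\cdot p_2^{*}H_2=e\cdot\h(x_1,\dots,x_n)$ (the function-field Weil height of a point is the degree on $C$ of the pulled-back hyperplane bundle). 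With $K':=k(t)(\sigma(x_1),\dots,\sigma(x_n))$, $e':=[K':k(t)]\le e$, and $\Gamma'$ the analogous curve (generic point $\bigl(t,[1:\sigma(x_1):\cdots:\sigma(x_n)]\bigr)$), the same dictionary yields $\Gamma'\cdot p_1^{*}H_1=e'$ and $\Gamma'\cdot p_2^{*}H_2=e'\cdot\h(\sigma(x_1),\dots,\sigma(x_n))\le e'nM\cdot\h(x_1,\dots,x_n)$, the inequality being Corollary~\ref{cor: height of affine points}. Crucially, $f(\sigma(x_1),\dots,\sigma(x_n))=0$ holds if and only if $\Gamma'\subseteq\mathcal{Z}$.

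Next I would describe how $\sigma$ transforms $\Gamma$. Let $[M]\colon\mathbb{P}^1\to\mathbb{P}^1$ be $z\mapsto z^M$. Since $\sigma(t)=t^M$ and $\sigma$ acts on coordinates simply by applying $\sigma$, transport of structure along the $k$-embedding $\sigma\colon K\hookrightarrow\overline{k(t)}$ shows that $[M]\times\operatorname{id}_{\mathbb{P}^n}$ carries $\Gamma'$ into $\Gamma$; in particular $\Gamma'\subseteq([M]\times\operatorname{id})^{-1}(\mathcal{Z})$, which re-proves $f^{\sigma}(\sigma(x_1),\dots,\sigma(x_n))=0$ but is not yet what we need. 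The essential extra input is that the fixed-point scheme $\operatorname{Fix}([M])\subset\mathbb{P}^1$ — the zero scheme of $z_0z_1\bigl(z_0^{M-1}-z_1^{M-1}\bigr)$ — is zero-dimensional of length $M+1$ in every characteristic, and $[M]$ agrees with $\operatorname{id}$ on it. Hence, setting $Z:=\Gamma'\times_{\mathbb{P}^1}\operatorname{Fix}([M])$ (fibre product along $p_1|_{\Gamma'}$, which is finite flat of degree $e'$), over $Z$ the morphism $[M]\times\operatorname{id}$ acts as the identity, so the composite $Z\hookrightarrow\Gamma'\hookrightarrow\mathbb{P}^1\times\mathbb{P}^n$ factors through $\Gamma$, hence through $\mathcal{Z}$; thus $Z$ is a closed subscheme of $\Gamma'\cap\mathcal{Z}$ of length $(M+1)\,e'$.

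The dichotomy then follows. Assume $f(\sigma(x_1),\dots,\sigma(x_n))\ne 0$, i.e.\ $\Gamma'\not\subseteq\mathcal{Z}$. As $\Gamma'$ is an irreducible curve and $\mathcal{Z}$ a Cartier divisor not containing it, $\Gamma'\cap\mathcal{Z}$ is zero-dimensional of length
$$\Gamma'\cdot\mathcal{Z}=\h(f)\bigl(\Gamma'\cdot p_1^{*}H_1\bigr)+d\bigl(\Gamma'\cdot p_2^{*}H_2\bigr)\le e'\bigl(\h(f)+dnM\,\h(x_1,\dots,x_n)\bigr).$$
Comparing with $\operatorname{length}(\Gamma'\cap\mathcal{Z})\ge\operatorname{length}(Z)=(M+1)e'$ and dividing by $e'$ gives $M+1\le\h(f)+dnM\,\h(x_1,\dots,x_n)$; since $M\ge 2\h(f)$, i.e.\ $\h(f)\le M/2$, this forces $\h(x_1,\dots,x_n)\ge\frac{M/2+1}{dnM}>\frac{1}{2dn}$, which is the claim.

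I expect the main obstacle to be the middle paragraph: verifying with the correct scheme structures that $[M]\times\operatorname{id}$ really maps $\Gamma'$ into $\Gamma$ and that $Z=\Gamma'\times_{\mathbb{P}^1}\operatorname{Fix}([M])$ is honestly a closed subscheme of $\Gamma'\cap\mathcal{Z}$ of length $(M+1)e'$. A little care is required because $\Gamma'$ is in general a \emph{proper} component of $([M]\times\operatorname{id})^{-1}(\Gamma)$ — typically when $t^{1/m}\in K$ for some divisor $m>1$ of $M$ — and the uniform bookkeeping with $e'$ (rather than $e$) on both sides is what makes the cancellation legitimate. The remaining ingredients — identifying the Weil height with an intersection number on $\mathbb{P}^1\times\mathbb{P}^n$, Bézout for a curve against a hypersurface, and the final arithmetic — are routine.
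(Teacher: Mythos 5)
Your proof is correct, but it takes a genuinely different route from the paper's. The paper's argument is a direct, elementary valuation computation: setting $\zeta := f(\sigma(x_1),\dots,\sigma(x_n))$ and using $f(x_1,\dots,x_n)=0$ to rewrite $\zeta=\sum_i(a_i-\sigma(a_i))\sigma(m_i)$, it observes that $t^M-t$ divides $\sigma(g)-g$ for every $g\in k[t]$, estimates $w(\zeta)$ place-by-place (gaining $w(t^M-t)$ at each place above a root of $t^M-t$), and sums against the product formula to get $0\ge -\h(f)-d\,\h(\sigma(P))+M$, from which the bound follows via Corollary~\ref{cor: height of affine points}. Your version geometrizes exactly this computation: the places above roots of $t^M-t$ are the affine part of $\operatorname{Fix}([M])$, the product formula becomes Bézout for $\Gamma'$ against the bihomogenization $\mathcal{Z}$, and the telescoping trick is replaced by the observation that $[M]\times\operatorname{id}$ carries $\Gamma'$ into $\Gamma\subseteq\mathcal{Z}$ so that $\Gamma'$ meets $\mathcal{Z}$ with multiplicity at least $e'$ at every point of $\operatorname{Fix}([M])$. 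Each approach buys something: the paper's is self-contained and characteristic-free with no scheme-theoretic overhead, while yours makes the mechanism transparent (the lower bound is literally a count of fixed points) and, by including the point at infinity in $\operatorname{Fix}([M])$, gives the marginally sharper inequality $\h(P)\ge\frac{M/2+1}{dnM}$. The places where you'd need to be careful in a full write-up — miracle flatness for $p_1|_{\Gamma'}$ (which holds because an integral curve is Cohen–Macaulay), preservation of degree under normalization $C'\to\Gamma'$, and the identification $\Gamma\cdot p_2^*H_2=e\cdot\h(P)$ with the normalizations of Section~\ref{se:proof} — all check out, and the bookkeeping in $e'$ (not $e$) on both sides is correct as you note.
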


\begin{proof}
Let $(x_1,\dots,x_n)\in\mathbb{A}_{\overline{k(t)}}^n$ be a zero of $f$. We let $f=\sum_i a_iM_i$,
where the $a_i$'s are the nonzero coefficients of $f$ and the $M_i$'s are the corresponding monomials of $f$. For each $i$, we let $m_i:=M_i(x_1,\dots,x_n)$.

Assume $f(\sigma(x_1),\dots,\sigma(x_n))\ne 0$. 

We let $K=k(t,x_1,\dots,x_n)$. If $\zeta= f(\sigma(x_1),\dots,\sigma(x_n))$, then (because $\zeta\ne 0$)
\begin{equation}
\label{E:sum formula for zeta}
\sum_{w\in M_K}w(\zeta)=0.
\end{equation}

Because $f(x_1,\dots,x_n)=0$, we get $\zeta=\zeta-\sigma(f(x_1,\dots,x_n))$ and so,
\begin{equation}
\label{E:trick}
\zeta=\sum_i (a_i-\sigma(a_i))\cdot \sigma(m_i).
\end{equation}

\begin{Claim}
\label{C:divizibilitate}
For every $g\in k[t]$, $\left(t^M-t\right)\mid\left(\sigma(g)-g\right)$.
\end{Claim}

\begin{proof}[Proof of Claim \ref{C:divizibilitate}.]
Let $g:=\sum_{j=0}^m b_jt^j$, with $b_j\in k$. Then $\sigma(g)=\sum_{j=0}^m b_jt^{jM}$. The proof of Claim~\ref{C:divizibilitate} is immediate because for every $j\in\mathbb{N}$, we have $\sigma(b_j)=b_j$ and  $\left(t^{M}-t\right)\mid\left(t^{jM}-t^j\right)$.
\end{proof}

Using the result of Claim ~\ref{C:divizibilitate} and equation \eqref{E:trick}, we get
\begin{equation}
\label{E:alternative zeta}
\zeta=(t^{M}-t)\cdot \sum_i b_i\sigma(m_i),
\end{equation}
where $b_i=\frac{a_i-a_i^{M}}{t^{M}-t}\in k[t]$. Let $S$ be the set of valuations $w\in M_K$ such that $w$ lies above each place of $K(t)$ corresponding to a root of $t^{M}-t$. For each $w\in S$,

\begin{equation}
\label{E:first inequality}
w(\zeta)\ge  w(t^{M}-t)-d\max\{0,-w(\sigma(x_1)),\dots, -w(\sigma(x_n))\},
\end{equation}
because for each $i$, $w(b_i)\ge 0$ (as $b_i\in k[t]$ and $w$ does not lie over $v_{\infty}$) and the total degree of $M_i$ is at most $d$.

For each $w\in M_K\setminus S$, because $\zeta=\sum_i a_i\sigma(m_i)$, we have
\begin{equation}
\label{E:second inequality}
w(\zeta)\ge -\max\{0,\max_i-w(a_i)\} -d\max\{0,-w(\sigma(x_1)),\dots,-w(\sigma(x_n))\}.
\end{equation}

Adding all inequalities from \eqref{E:first inequality} and \eqref{E:second inequality} we obtain
\begin{equation}
\label{E:final inequality}
0=\sum_{w\in M_K}\frac{w(\zeta)}{[K:k(t)]}\ge -\h(f)-d\h((\sigma(x_1),\dots,\sigma(x_n)))+\sum_{\substack{w\in M_K\\w(t^{M}-t)>0}}\frac{ w(t^{M}-t)}{[K:k(t)]}.
\end{equation}
By the coherence of the valuations on $\Kbar$ (see \eqref{coherence of the place v}), we have 
$$\sum_{\substack{w\in M_K\\w(t^{M}-t)>0}}\frac{ w(t^{M}-t)}{[K:k(t)]}=\sum_{\substack{v\in M_{k(t)}\\v(t^{M}-t)>0}} v(t^{M}-t)=-v_{\infty}(t^{M}-t)=M.$$
Thus, inequality \eqref{E:final inequality} yields
$$0\ge-\h(f)-d\h((\sigma(x_1),\dots,\sigma(x_n)))+M$$
and so, using Corollary~\ref{cor: height of affine points} we obtain
\begin{equation}
\label{E:rough inequality}
dnM\h((x_1,\dots,x_n))\ge M-h(f).
\end{equation}
Because $M$ was chosen such that $M\ge 2\h(f)$, we conclude that
\begin{equation}
\label{E:the inequality}
\h((x_1,\dots,x_n))\ge\frac{1}{2dn}.
\end{equation}
as desired.
\end{proof}

\begin{lemma}
\label{L:L2}
Let  $f\in \overline{k(t)}[X_1,\dots,X_n]$ be a nonzero polynomial, and let $\sigma$ be an automorphism of $\overline{k(t)}$ fixing pointwise $k$ and mapping $t$ into $t^M$ (for some integer $M>1$). If $f(X_1,\dots,X_n)\mid f^{\sigma^{-1}}(X_1,\dots,X_n)$,  
then there exists a nonzero $a\in\overline{k(t)}$ such that  $a\cdot f\in k[X_1,\dots,X_n]$.
\end{lemma}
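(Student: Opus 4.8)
The plan is to compare degrees in the divisibility relation $f \mid f^{\sigma^{-1}}$ and exploit the fact that $\sigma^{-1}$ has infinite order, so that iterating forces the coefficients of $f$ to be essentially $\sigma$-invariant, hence (after scaling) to lie in the fixed field $k$ by Lemma \ref{fixed field of sigma}. First I would reduce to the case where $f$ has coprime coefficients in $k[t][X_1,\dots,X_n]$: multiplying $f$ by a suitable element of $\overline{k(t)}$ we may clear denominators and common factors, and since the hypothesis $f \mid f^{\sigma^{-1}}$ is unaffected by scaling $f$ by a constant in $\overline{k(t)}^\times$ (it rescales both sides), we may assume $f$ is reduced in the sense of Definition \ref{D:reduced polynomials}, with coefficients in $k[T]$ for $T = t^{1/M^N}$ for suitable $N$ (so that $\sigma$ acts as an automorphism of $k(T)$, cf.\ the proof of Lemma \ref{height of sigma}).

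Next, since $f \mid f^{\sigma^{-1}}$ in $\overline{k(t)}[X_1,\dots,X_n]$ and both polynomials have the same total degree $d$ in the $X_i$ (as $\sigma^{-1}$ only acts on coefficients), we get $f^{\sigma^{-1}} = c \cdot f$ for some $c \in \overline{k(t)}^\times$; in fact, comparing leading coefficients with respect to some monomial order and using that $f$ is reduced with coprime coefficients in $k[T]$, the unit $c$ must itself be (up to a unit of $k[T]$, i.e.\ up to a nonzero scalar) a ratio $\sigma^{-1}(a_{i_0})/a_{i_0}$; more carefully, applying $\sigma$ we get $f = \sigma(c) \cdot f^{\sigma}$, and iterating $\sigma^n(c)\sigma^{n-1}(c)\cdots c \cdot f^{\sigma^n} = f$ for all $n$. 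Now $\h(f^{\sigma^n}) = M^n \h(f)$ if $f$ is not constant in $t$ (by the argument of Lemma \ref{height of sigma} applied coefficientwise), whereas the height of the left side, rescaled appropriately, cannot grow unless $c$ contributes — the point is that $\deg_T(c)$ is forced to be $0$, i.e.\ $c \in k^\times$. Then $f^{\sigma^{-1}} = c f$ with $c \in k^\times$; comparing the $\h_T$-height of $f$ and $f^{\sigma^{-1}}$ via $\deg_T(\sigma^{-1}(a_i))$ forces every coefficient $a_i$ of $f$ to be constant in $T$, i.e.\ $f \in k[X_1,\dots,X_n]$, whence $a = 1$ works. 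Alternatively, once $c \in k^\times$ one checks $c$ is a root of unity (comparing a fixed coefficient $a_{i_0}$: $\sigma^{-1}(a_{i_0}) = c\, a_{i_0}$ with both in $k[T]$ of the same $T$-degree, so $c \in k^\times$, and then applying $\sigma^{-r}$ repeatedly keeps the degree fixed, forcing $\deg_T a_{i_0}=0$), and then $\sigma$ fixes each $a_i$ up to this root of unity, so a single scaling of $f$ by an element of $\overline{k(t)}$ puts all coefficients in the fixed field $k$ of $\sigma$.

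The main obstacle I anticipate is bookkeeping the unit $c \in \overline{k(t)}^\times$ precisely: a priori $f^{\sigma^{-1}} = cf$ only tells us $c$ is a ratio of two elements of $k[T]$, and one must argue that the combination of (i) $f$ reduced with coprime coefficients, (ii) $\sigma$ multiplying $T$-degrees by $M$, and (iii) the relation holding for $f^{\sigma^{-1}}$ (degrees \emph{divided} by $M$) pins down $\deg_T c = 0$ and then $\deg_T a_i = 0$ for all $i$ — essentially, the $T$-degree of a coefficient can neither strictly increase nor strictly decrease under the constraint, so it must vanish. Once that rigidity is in hand, the conclusion that $a\cdot f \in k[X_1,\dots,X_n]$ for a suitable $a$ (indeed $a$ can be taken to be a scalar, even $1$) is immediate from Lemma \ref{fixed field of sigma}.
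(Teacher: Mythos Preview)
Your central observation is correct and cleaner than what the paper does: since $\sigma^{-1}$ acts only on coefficients, $\deg_X(f^{\sigma^{-1}}) = \deg_X(f)$, so $f \mid f^{\sigma^{-1}}$ in $\overline{k(t)}[X_1,\dots,X_n]$ forces $f^{\sigma^{-1}} = c f$ for some nonzero $c \in \overline{k(t)}$. But two steps in your write-up do not go through as stated. First, you cannot in general scale $f$ to have coefficients in $k[T]$ for $T = t^{1/M^N}$: the coefficients of $f$ are arbitrary elements of $\overline{k(t)}$, and their pairwise ratios need not lie in $\bigcup_N k\bigl(t^{1/M^N}\bigr)$ (e.g.\ $f = X_1 + \sqrt{t^2+1}\, X_2$ when $\Char k \neq 2$). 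Second, the iterated-height argument you sketch for forcing $c \in k^\times$ is not pinned down: height is only subadditive under products, so comparing $\h(f)$ with $\h(f^{\sigma^n}) = M^n \h(f)$ through the scalar $\prod_i \sigma^i(c)$ does not obviously yield a contradiction.

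Both issues disappear with a one-line finish that you nearly reach. From $f^{\sigma^{-1}} = cf$, every nonzero coefficient satisfies $\sigma^{-1}(a_i) = c\, a_i$, whence $\sigma^{-1}(a_i/a_j) = a_i/a_j$ for all $i,j$; each ratio $a_i/a_j$ therefore lies in the fixed field $k$ of $\sigma$ (Lemma~\ref{fixed field of sigma}), and so $a_{i_0}^{-1} f \in k[X_1,\dots,X_n]$ for any fixed nonzero coefficient $a_{i_0}$. By contrast, the paper takes a more geometric route: it passes to irreducible components of $Z(f)$, notes that $\sigma$ permutes them (hence some power fixes each), and then for each irreducible factor $g$ normalized to have one coefficient equal to $1$ concludes $g^{\sigma} = g$, so $g \in k[X_1,\dots,X_n]$. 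Your degree-comparison shortcut avoids the irreducible decomposition altogether, and once patched as above is a genuine simplification.
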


\begin{proof}
Let $Z:=Z(f)$ be the zero set for $f$. The hypothesis on $f$ shows that for every $P\in Z(\overline{k(t)})$, we have $P^\sigma\in Z(\overline{k(t)})$. Hence $Z$ is invariant under $\sigma$, and therefore $Z$ is defined over the fixed field  of (a power of) $\sigma$, which is $k$ (by Lemma~\ref{fixed field of sigma}).  

Essentially, the reasoning is as follows:  $Z$ and $Z^{\sigma}$ (the hypersurface given by the equation $f^{\sigma^{-1}}=0$) have the same number of irreducible components, and thus each irreducible component of $Z$ is fixed by (a power of) $\sigma$. Since the fixed subfield for each power of $\sigma$ is $k$ (by Lemma~\ref{fixed field of sigma}), without loss of generality we may assume each irreducible component of $Z$ is fixed by $\sigma$. In other words, for each irreducible polynomial $g$ dividing $f$ we have $Z(g)=Z\left(g^{\sigma^{-1}}\right)$; also, we may assume at least one of the coefficients of $g$ equals $1$ (we simply divide $g$ by one of its nonzero coefficients from $\overline{k(t)}$). So, there exists $b\in\overline{k(t)}$ such that $g^{\sigma}=b\cdot g$; but because one of the coefficients of $g$ equals $1$, we conclude that $b=1$. Hence $g$ has all its coefficients in $k[t]$. Since
$$f=A\cdot \prod_i g_i^{e_i},$$
for some $A\in\overline{k(t)}$, we obtain the desired conclusion with $a=A^{-1}$.
\end{proof}

\begin{lemma}
\label{L:L3}
Let $V\subset\mathbb{A}^n$ be a proper affine $\overline{k(t)}$-subvariety.  Then there exists a positive constant $C$, depending only on $V$, and there exists a proper affine $k$-subvariety $Z\subset\mathbb{A}^n$, which also depends only on $V$, such that for every $P\in V(\overline{k(t)})$, either $P\in Z(\overline{k(t)})$ or $\h(P)\ge C$.
\end{lemma}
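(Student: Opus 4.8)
The plan is to reduce Lemma~\ref{L:L3} to the hypersurface case already treated in Lemmas~\ref{L:L1} and~\ref{L:L2}. First I would choose a single nonzero polynomial $f$ vanishing on $V$; since $V\subset\mathbb{A}^n$ is a proper subvariety, such an $f$ exists, and after clearing denominators and removing common factors we may assume $f\in k[t][X_1,\dots,X_n]$ is reduced in the sense of Definition~\ref{D:reduced polynomials}. Let $d$ be its total degree and set $M:=\max\{2,2\h(f)\}$, with $\sigma=\sigma_M$ the corresponding automorphism of $\overline{k(t)}$ fixing $k$ and sending $t\mapsto t^M$. Apply Lemma~\ref{L:L1}: for every $P=(x_1,\dots,x_n)\in V(\overline{k(t)})\subseteq Z(f)(\overline{k(t)})$, either $\h(P)\ge\frac{1}{2dn}$ or $f(P^\sigma)=0$, i.e.\ $P^\sigma\in Z(f)(\overline{k(t)})$.

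Now I would let $Z_0:=Z(f)$ and iterate. Define $Z_0\supseteq Z_1\supseteq Z_2\supseteq\cdots$ by letting $Z_{j+1}$ be the union of the irreducible components of $Z_j$ that are \emph{not} invariant under $\sigma$; equivalently, pick a polynomial $f_j$ cutting out $Z_j$ with $f_j\in k[t][\underline X]$ reduced and consider those irreducible factors $g$ of $f_j$ with $Z(g)\ne Z(g^{\sigma^{-1}})$. Since $\sigma$ permutes the (finitely many) irreducible components of $Z_j$ and preserves dimension, the components it does not fix have the property that $P\in Z(g)(\overline{k(t)})$ implies $P^\sigma\notin Z(g)(\overline{k(t)})$ for such a $g$ — wait, more carefully: if $P$ lies on a $\sigma$-non-invariant component $Z(g)$ of $Z_j$ and $P^\sigma\in Z_j$, then $P^\sigma$ lies on some component $Z(g')$ with $Z(g')=Z(g^{\sigma^{-1}})$, which is again a $\sigma$-non-invariant component of $Z_j$, hence of $Z_{j+1}$; so the orbit of $P$ under $\sigma$ stays inside $Z_{j+1}$. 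The key point is that this chain stabilizes: once we reach $Z_N$ with $Z_{N+1}=Z_N$, every irreducible component of $Z_N$ is $\sigma$-invariant, so by Lemma~\ref{L:L2} (applied to each irreducible factor) $Z_N$ is the zero set of a polynomial with coefficients in $k$, i.e.\ $Z_N$ is defined over $k$. Set $Z:=Z_N\cap V$, a proper affine $k$-subvariety — proper in $\mathbb{A}^n$ because $Z_N\subseteq Z(f)\subsetneq\mathbb{A}^n$ — depending only on $V$.

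To finish, take any $P\in V(\overline{k(t)})$ with $P\notin Z(\overline{k(t)})$; then $P\notin Z_N$. Let $j$ be minimal with $P\notin Z_j$; so $0\le j\le N$ and $P\in Z_{j-1}$ (with $Z_{-1}:=\mathbb{A}^n$ vacuously, but actually $j\ge 1$ since $P\in V\subseteq Z_0$). Then $P$ lies on a $\sigma$-non-invariant component of $Z_{j-1}$, or $P\in Z_{j-1}\setminus Z_j$ means $P$ is on such a component. Applying Lemma~\ref{L:L1} to the reduced polynomial $f_{j-1}$ cutting out $Z_{j-1}$ (whose height is a constant depending only on $V$, and for which we choose $M$ large enough — note a single $M$ works for all the $f_j$ if we take $M:=\max\{2,2\max_j\h(f_j)\}$, and then reuse the same $\sigma$ throughout), we get that either $\h(P)\ge\frac{1}{2d_{j-1}n}$, or $P^\sigma\in Z(f_{j-1})=Z_{j-1}$. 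In the latter case, since $P$ is on a non-invariant component, $P^\sigma$ is on another non-invariant component, so $P^\sigma\in Z_j$; but then $P^{\sigma^{-1}\sigma}=P$... the cleaner statement: if $P\notin Z_j$ then $P^\sigma$ cannot satisfy $f_{j-1}(P^\sigma)=0$ while keeping $P$ off $Z_j$ — instead I argue directly that $P^\sigma\in Z_{j-1}$ forces $P$ onto $Z(g^{\sigma})$ for the component $Z(g)$ containing $P^\sigma$, and tracking this shows $P\in Z_j$, a contradiction. Hence $\h(P)\ge\frac{1}{2d_{j-1}n}\ge C$ where $C:=\frac{1}{2(\max_j d_j)n}=\frac{1}{2dn}$ depends only on $V$.

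The main obstacle is the bookkeeping in the iteration: one must verify that the descending chain $Z_0\supseteq Z_1\supseteq\cdots$ is well-defined independently of the auxiliary choices, that it terminates, and — most delicately — that a point $P\in V$ lying off the final $k$-subvariety $Z_N$ necessarily sits on a $\sigma$-non-invariant component at \emph{some} stage in a way that lets Lemma~\ref{L:L1} apply, with $\sigma$ (equivalently $M$) chosen uniformly for all stages. Once a single $M$ dominating twice the heights of all the finitely many $f_j$ is fixed, the same $\sigma$ is used throughout and the height lower bound $C$ becomes a genuine constant depending only on $V$. The rest is the component-chasing argument via Lemma~\ref{L:L2}, which is routine.
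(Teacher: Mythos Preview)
Your argument has a fundamental gap: you assert that ``$\sigma$ permutes the (finitely many) irreducible components of $Z_j$,'' but this is false unless $Z_j$ is itself $\sigma$-invariant, i.e., already defined over $k$. In general $\sigma(Z(f))=Z(f^\sigma)\neq Z(f)$; for instance, with $n=1$ and $f=X_1-t$ one has $Z(f)=\{t\}$ while $\sigma(Z(f))=\{t^M\}$. Because $\sigma$ does not act on the set of components of $Z(f)$, your descending chain $Z_0\supseteq Z_1\supseteq\cdots$ and the subsequent component-chasing are not well-defined in the way you intend. (There is also an internal contradiction even as written: you define $Z_{j+1}$ as the union of the \emph{non}-invariant components of $Z_j$, so $Z_{N+1}=Z_N$ forces every component of $Z_N$ to be non-invariant, the opposite of what you claim.) Further issues: the choice of $M$ is circular, since which components are ``$\sigma$-invariant'' depends on $\sigma$ and hence on $M$, yet you want $M\ge 2\max_j\h(f_j)$ for polynomials $f_j$ not available until $\sigma$ is fixed; and $Z:=Z_N\cap V$ is not a $k$-variety, since $V$ need not be defined over $k$.

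The paper avoids all of this by inducting on the ambient dimension $n$ rather than iterating on components of a fixed hypersurface. After reducing to $V$ defined over $k(t)$ and passing to an $L$-irreducible factor $f\notin k[X_1,\dots,X_n]$, Lemma~\ref{L:L1} shows that a small-height zero $P$ of $f$ is also a zero of $f^{\sigma^{-1}}$. Lemma~\ref{L:L2} gives $f\nmid f^{\sigma^{-1}}$, so $f$ and $f^{\sigma^{-1}}$ are coprime, and their resultant $R$ in $X_n$ is a nonzero polynomial in $X_1,\dots,X_{n-1}$ vanishing at $(x_1,\dots,x_{n-1})$; the inductive hypothesis in $\mathbb{A}^{n-1}$ then applies to $Z(R)$. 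The resultant step is exactly what your component argument is missing: it converts ``$P$ lies on two coprime hypersurfaces'' into a genuine drop in dimension.
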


\begin{remark}
The only difference between Lemma~\ref{L:L3} and Theorem~\ref{main result} is that we do not require $Z$ be contained in $V$.
\end{remark}

\begin{proof}[Proof of Lemma~\ref{L:L3}.]
The proof follows the arguments from  \cite[Lemma~3.5]{Ghioca-bogo3}. We proceed by induction on $n$.  The case $n=1$ is obvious, because any subvariety of $\mathbb{A}^1$, different from $\mathbb{A}^1$, is a finite union of points. Thus we may take $Z=V(k)$, (which is also a finite union of points) and $C:=\min_{P\in (V\setminus Z)(\overline{k(t)})}\h(P)$ (if there are no points in $V(\overline{k(t)})\setminus V(k)$, then we may take $C=1$, say). We note that in this case ($n=1$) we actually proved  Theorem \ref{main result}, because the variety $Z$ that we chose is a subvariety of $V$. 

We assume Lemma \ref{L:L3} holds for $n-1$ and we prove it for $n$ ($n\ge  2$). Let $K$ be a finite field extension of $k(t)$ (of minimal degree) such that $V$ is defined over $K$. Let $p^m$ be the inseparable degree of the extension $K/k(t)$ ($m\ge 0$). Let 
$$V_1=\bigcup_{\sigma}V^{\sigma},$$
where $\sigma$ denotes any field morphism $K\rightarrow \overline{k(t)}$ which fixes $k(t)$. The variety $V_1$ is a $k(t^{1/p^m})$-variety (note that $k$ is algebraically closed). Also, $V_1$ depends only on $V$. Thus, if we prove Lemma~\ref{L:L3} for $V_1$, then our result will hold also for $V\subset V_1$. Hence we may and do assume that $V$ is defined over $k(t^{1/p^m})$.

Assume $m>0$; then $k$ has positive characteristic. We let $F$ be the Frobenius corresponding to $\mathbb{F}_p$. The variety $V'=F^mV$ is a $k(t)$-variety, which depends only on $V$. Assume we proved Lemma~\ref{L:L3} for $V'$ and let $C'$ and $Z'$ be the positive constant and the $k$-variety, respectively, associated to $V'$, as in the conclusion of Lemma~\ref{L:L3}. Let $P\in V(\overline{k(t)})$. Then $P':=F^m(P)\in V'(\overline{k(t)})$. Thus, either 
$$\h(P')\ge C'\text{ or}$$ 
$$P'\in Z'(\overline{k(t)}).$$
In the former case, because $\h(P)=\frac{1}{p^m}\h(P')$, we obtain a lower bound for the height of $P$, depending only on $V$ (note that $m$ depends only on $V$). In the latter case, if we let $Z$ be the $k$-subvariety of $\mathbb{A}^n$, obtained by extracting the $p^m$-th roots of the coefficients of a set of polynomials (defined over $k$) which generate the vanishing ideal for $Z'$, we get $P\in Z(\overline{k(t)})$. By its construction, $Z$ depends only on $V$ and so, we obtain the conclusion of Lemma~\ref{L:L3}.

Thus, from now on in this proof, we assume $V$ is a $k(t)$-variety. We fix a set of defining polynomials for $V$ which contains polynomials $P_i\in k[t][X_1,\dots,X_n]$ for which $$\max_i\deg(P_i)$$ is minimum among all possible sets of defining polynomials for $V$ (where $\deg P_i$ is the total degree of $P_i$). We may assume all of the polynomials we chose are reduced. If all of them have coefficients in $k$, then Lemma ~\ref{L:L3} holds with $Z=V$ and $C$ any positive constant. Once again, in this case, Theorem~\ref{main result} holds.

Assume there exists a reduced polynomial $f\notin k[X_1,\dots,X_n]$ in the fixed set of defining equations for $V$. Let $L:=\bigcup_{\ell\ge 1} k\left(t^{1/\ell!}\right)$, and let $\{f_i\}_i$ be the set of all the  $L$-irreducible factors of $f$. For each $i$ let $H_i$ be the zero set of $f_i$. Then $V$ is contained in the finite union $\cup_i H_i$.
The polynomials $f_i$ depend only on $f$. Thus it suffices to prove Lemma~\ref{L:L3} for each $H_i$. Hence we may and do assume $V$ is the zero set of an $L$-irreducible polynomial $f\notin k[X_1,\dots,X_n]$. For the sake of simplifying our notation, we let $T:=t^{1/\ell!}$ such that $f\in k[T][X_1,\dots, X_n]$; moreover we may assume $f$ is reduced (over $k[T]$). Again, note that $\ell$ depends only on $V$. 
Let $M$ and $\sigma$ be as in Lemma~\ref{L:L1} with respect to the polynomial $f$ defined over $k[T]$ (note that we still ask that $\sigma(t)=t^M$).

Let $P=(x_1,\dots,x_n)\in V(\overline{k(t)})$.  We apply Lemma~\ref{L:L1} to $f$ and $P$ and conclude that either 
\begin{equation}
\label{E:height inequality}
\h(P)\ge\frac{1}{2n\ell!\deg(f)}
\end{equation}
or 
\begin{equation}
\label{E:alternative}
f(\sigma(x_1),\dots,\sigma(x_n))=0.
\end{equation}
If \eqref{E:height inequality} holds, then we obtained a good lower bound for the height of $P$ (depending only on $V$). 

Assume \eqref{E:alternative} holds. Because $f$ is an irreducible and reduced polynomial, whose coefficients are not all in $k$, Lemma~\ref{L:L2} yields that $f(X_1,\dots,X_n)$ cannot divide $f^{\sigma^{-1}}(X_1,\dots,X_n)$. Indeed, if $f\mid f^{\sigma^{-1}}$ then there exists $a\in\overline{k(t)}$ such that $af\in k[X_1,\dots,X_n]$. But this yields that for each nonzero coefficient $c_i$ of $f$ we have $\sigma(c_i)=ac_i$; so $a\in k(T)$ because each $c_i,\sigma(c_i)\in k[T]$. On the other hand, the $c_i$'s are relatively prime and therefore the $\sigma(c_i)$'s are relatively prime, which yields that $a\in k$. But then because  $\deg_T(\sigma(c_i))=M\cdot \deg_T(c_i)$, we conclude that each $c_i$ is in $k$, which is a contradiction with the fact that $f\notin k[X_1,\dots, X_n]$.

We know $f$ has more than one monomial because it is reduced and not all of its coefficients are in $k$. 
Without loss of generality, we may assume $f$ has positive degree in $X_n$. Because $f$ is $L$-irreducible and $f$ does not divide $f^{\sigma^{-1}}$ (which is also defined over $L$), we conclude that $f$ and $f^{\sigma^{-1}}$ are relatively prime. So, the resultant $R$ of the polynomials $f(X_1,\dots,X_n)$ and $f^{\sigma^{-1}}(X_1,\dots,X_n)$ with respect to the variable $X_n$ is nonzero. Moreover, $R$ depends only on $f$.

The nonzero polynomial $R\in k(t^{1/M\ell !})[X_1,\dots,X_{n-1}]$ (since $f^{\sigma^{-1}}$ is defined over $k(t^{1/M\ell!})$) vanishes on $(x_1,\dots,x_{n-1})$. Applying the induction hypothesis to the hypersurface $R=0$ in $\mathbb{A}^{n-1}$, we conclude there exists a proper $k$-subvariety $Z_1\subset \bA^{n-1}$,  depending only on $R$ (and so, only on $V$) and there exists a positive constant $C$, depending only on $R$ (and so, only on $V$) such that either
\begin{equation}
\label{E:1stdichotomy1}
\h(x_1,\dots,x_{n-1})\ge C\text{ or}
\end{equation}
\begin{equation}
\label{E:1stdichotomy2}
(x_1,\dots,x_{n-1})\in Z_1(\overline{k(t)}).
\end{equation}
If \eqref{E:1stdichotomy1} holds, then $\h(x_1,\dots,x_{n-1},x_n)\ge\h(x_1,\dots,x_{n-1})\ge C$ and we have a height inequality as in the conclusion of Lemma ~\ref{L:L3}. If \eqref{E:1stdichotomy2} holds, then $(x_1,\dots,x_n)\in \left(Z_1\times\mathbb{A}^1\right)(\overline{k(t)})$ and $Z_1\times\mathbb{A}^1$ is a $k$-variety, strictly contained in $\mathbb{A}^n$, as desired in Lemma~\ref{L:L3}. This proves the inductive step and concludes the proof of Lemma ~\ref{L:L3}.
\end{proof}

The following result is proved in \cite[Corollary~2.4]{Ghioca-bogo3}; essentially the proof relies on the fact that if $(C_i,Z_i)$ are two pairs as in the conclusion of Lemma~\ref{L:L3} (for $i=1,2$), then $(\min\{C_1,C_2\},Z_1\cap Z_2)$ is another pair satisfying the conclusion of Lemma~\ref{L:L3}.
\begin{corollary} 
\label{C:L3}
Let $X$ be a proper subvariety of $\mathbb{A}^n$ defined over $\overline{k(t)}$. There exists a positive constant $C$ and a proper subvariety $Z\subset\mathbb{A}^n$ defined over $k$, such that the pair $(C,Z)$ satisfies the conclusion of Lemma~\ref{L:L3}, and moreover $Z$ is minimal with this property (with respect to the inclusion of subvarieties of $\mathbb{A}^n$). 
\end{corollary}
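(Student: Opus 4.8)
The plan is to produce the minimal $Z$ directly, by intersecting \emph{all} the $k$-subvarieties that work in Lemma~\ref{L:L3} and showing, via Noetherianity, that this intersection is again such a subvariety. Concretely, let $\calF$ denote the collection of all proper affine subvarieties $Z\subset\bA^n$ defined over $k$ for which there exists a positive constant $C$ such that the pair $(C,Z)$ satisfies the conclusion of Lemma~\ref{L:L3} for $X$ — that is, every $P\in X(\overline{k(t)})$ satisfies either $P\in Z(\overline{k(t)})$ or $\h(P)\ge C$. By Lemma~\ref{L:L3} the family $\calF$ is nonempty, and the whole task is to locate its (unique) minimal element with respect to inclusion.

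The key step is the stability property flagged in the statement: if $Z_1,Z_2\in\calF$ with respective constants $C_1,C_2$, then $Z_1\cap Z_2\in\calF$ with constant $\min\{C_1,C_2\}$. This is a one-line verification. First, $Z_1\cap Z_2$ is again an affine subvariety of $\bA^n$ defined over $k$, and it is proper because it is contained in the proper subvariety $Z_1$. Second, if $P\in X(\overline{k(t)})$ has $\h(P)<\min\{C_1,C_2\}$, then $\h(P)<C_1$ gives $P\in Z_1(\overline{k(t)})$ and $\h(P)<C_2$ gives $P\in Z_2(\overline{k(t)})$, whence $P\in(Z_1\cap Z_2)(\overline{k(t)})$. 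Iterating, $\calF$ is closed under arbitrary finite intersections.

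To finish, I would invoke Noetherianity of the topological space $\bA^n$: among all closed subsets of the form $W_1\cap\cdots\cap W_m$ with $W_i\in\calF$ there is one that is minimal for inclusion, say $Z:=W_1\cap\cdots\cap W_m$. For any $W\in\calF$, the set $Z\cap W$ is again a finite intersection of members of $\calF$ contained in $Z$, so minimality forces $Z\cap W=Z$, i.e. $Z\subseteq W$; hence $Z=\bigcap_{W\in\calF}W$. By the stability property $Z\in\calF$, and one may take $C:=\min_{1\le i\le m}C_i$ as an associated constant. Since $Z$ is contained in every member of $\calF$, the pair $(C,Z)$ satisfies the conclusion of Lemma~\ref{L:L3} and $Z$ is the minimal (in fact unique minimal) such subvariety, as required.

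I do not anticipate a genuine obstacle here: all of the arithmetic content already resides in Lemma~\ref{L:L3}, and the present statement is a purely formal consequence. The only points deserving a word of care are that the intersection of the relevant varieties stays a \emph{proper} subvariety (immediate, as it lies inside a proper one) and that one may replace the a priori infinite intersection by a finite one (immediate from the descending chain condition on closed subsets of $\bA^n$); both are routine.
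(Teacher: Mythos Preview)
Your proposal is correct and follows essentially the same approach as the paper: the paper merely cites \cite[Corollary~2.4]{Ghioca-bogo3} and notes that the key fact is precisely your stability property, namely that if $(C_i,Z_i)$ are two admissible pairs then so is $(\min\{C_1,C_2\},Z_1\cap Z_2)$. Your Noetherian descent to extract the minimal $Z$ from this closure-under-intersection is the standard way to finish, and there is nothing to add.
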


Then Theorem \ref{main result} follows from Corollary~\ref{C:L3} exactly as the proof of \cite[Theorem~2.2]{Ghioca-bogo3}; the only difference is that $\mathbb{F}_p$ is replaced by $k$.

\end{document}